\documentclass[a4paper,fleqn]{cas-sc}

\usepackage[numbers]{natbib}
\def\tsc#1{\csdef{#1}{\textsc{\lowercase{#1}}\xspace}}
\tsc{WGM}
\tsc{QE}

\usepackage{amssymb}
\usepackage{amsmath}
\usepackage{amsthm}
\usepackage{mathtools}
\usepackage{siunitx}
\usepackage{enumitem}
\usepackage{bm}
\usepackage{epsfig}
\usepackage{tikz}
\usepackage{caption, subcaption}
\usepackage{float}
\allowdisplaybreaks

\newcommand{\rmd}{\mathrm{d}}
\newcommand{\bfx}{\bm{x}}
\newcommand{\bfq}{\bm{q}}
\newcommand{\bff}{\bm{f}}
\newcommand{\bfu}{\bm{u}}
\newcommand{\bfy}{\bm{y}}
\newcommand{\bfh}{\bm{h}}
\newcommand{\bfz}{\bm{z}}
\newcommand{\R}{\mathbb{R}}
\newcommand{\bfphi}{\bm{\Phi}}
\newcommand{\bfpsi}{\bm{\Psi}}
\newcommand{\opsi}{\overline{\bm{\Psi}}}

\newcommand{\G}{\mathcal{G}}
\newcommand{\A}{\bm{A}}
\renewcommand{\H}{\bm{H}}
\newcommand{\B}{\bm{B}}
\newcommand{\M}{\mathcal{M}}

\newcommand{\W}{\bm{W}}
\DeclareMathOperator{\tr}{Tr}
\newcommand{\lam}{\bm{\lambda}}
\newcommand{\bfe}{\bm{e}}
\newcommand{\C}{\bm{C}}
\newcommand{\K}{\bm{K}}
\newcommand{\Q}{\bm{Q}}
\newcommand{\MR}{\bm{R}}
\renewcommand{\S}{\bm{S}}

\newcommand{\PP}{\mathbb{P}}
\newcommand{\Z}{\bm{Z}}
\newcommand{\U}{\bm{U}}
\newcommand{\E}{\bm{E}}
\newcommand{\bfv}{\bm{v}}
\DeclareMathOperator{\mat}{Mat}
\DeclarePairedDelimiterX{\inp}[2]{\langle}{\rangle}{#1, #2}

\DeclareGraphicsExtensions{.pdf}

\makeatletter
\def\thmheadbrackets#1#2#3{%
  \thmname{#1}\thmnumber{\@ifnotempty{#1}{ }\@upn{#2}}%
  \thmnote{ {\the\thm@notefont[#3]}}}
\makeatother

\newtheoremstyle{brakets}% Name
  {}% space above
  {}% space below
  {\itshape}% body font
  {}% indent
  {\bfseries}% head font
  {.}% punctuation after head
  { }% space after head (has to be space or dimension!)
  {\thmheadbrackets{#1}{#2}{#3}}% head spec

\theoremstyle{brakets}
\newtheorem{prop}{Proposition}[section]

\begin{document}
\let\WriteBookmarks\relax
\def\floatpagepagefraction{1}
\def\textpagefraction{.001}

\shorttitle{Guaranteed-stable Non-intrusive Optimization of Reduced-Order Models}
\shortauthors{C.J. Errico et~al.}

\title[mode=title]{GasNiTROM: Model Reduction via Non-Intrusive Optimization of Oblique Projection Operators and Guaranteed-Stable Latent-Space Dynamics}

\author[1]{Cole J. Errico}[type=editor,orcid=0000-0002-3014-7498]
\cormark[1]
\ead{cerrico2@illinois.edu}
\author[2]{Alberto Padovan}
\ead{alberto.padovan@njit.edu}
\author[1]{Daniel J. Bodony}
\ead{bodony@illinois.edu}

\affiliation[1]{organization={Department of Aerospace Engineering, University of Illinois Urbana-Champaign},%Department and Organization
            city={Urbana},
            state={Illinois},
            postcode={61801},
            country={USA}}
\affiliation[2]{organization={Department of Mechanical and Industrial Engineering, New Jersey Institute of Technology},
            city={Newark},
            state={New Jersey},
            postcode={07102},
            country={USA}}

\cortext[cor1]{Corresponding author}

%% Abstract
\begin{abstract}
Non-intrusive reduced-order modeling techniques are necessary for systems that are simulated using black-box solvers or known only from data. For systems exhibiting large transients and operating far away from equilibria, current non-intrusive models often exhibit poor forecasting accuracy and can even be unstable in infinite or finite time. Recent developments have addressed the stability issue by seeking structure-preserving latent-space architectures when reducing Hamiltonian or Lagrangian full-order dynamics, or by enforcing global stability via Lyapunov-informed parameterizations in the latent space. However, such developments do not necessarily improve the forecasting accuracy of the resulting models, since these formulations achieve dimensionality reduction using orthogonal projections that accidentally truncate dynamically-important states. In this paper, we address both issues by introducing a non-intrusive framework designed to simultaneously identify globally-asymptotically-stable latent-space dynamics, and oblique projection operators capable of capturing the sensitivity mechanisms of the system. In particular, given a Lyapunov-based parameterization of the latent-space tensors, and a matrix-manifold parameterization of the oblique projection operators, we fit a model against high-fidelity training trajectories. Furthermore, we show that the gradient of the objective function can be written in closed form using adjoint-based backpropagation in the latent space, eliminating the need for automatic differentiation. We compare our formulation with state-of-the-art methods on a three-dimensional system of ordinary differential equations, and a two-dimensional lid-driven cavity flow at Reynolds number $Re=8300$. We demonstrate that our models are not only globally asymptotically stable (as expected by construction), but they are also significantly more accurate.
\end{abstract}

% Intrusive Petrov-Galerkin models have been shown to attain a high accuracy, but these formulations require access to the governing equations and potentially their adjoint, which is difficult or impossible to obtain from commercial codes.
% Recent advances to non-intrusive approaches such as Operator Inference have alleviated the stability issue, but fail to accurately capture the dynamics due to orthogonal projection of the full-order data.

%%Research highlights
% \begin{highlights}
% \item We introduce a novel non-intrusive model reduction method where we simultaneously seek globally-asymptotically-stable latent-space dynamics and oblique-projection operators capable of predicting the dynamics of nonlinear systems along transient trajectories.
% \item The latent-space dynamics and the projections are parameterized over an appropriate matrix manifold defined as the product of a Grassmannian, a Stiefel manifold, and several linear spaces.
% \item The gradient of the cost function with respect to the parameters is derived in closed-form using adjoint-based backpropagation, thereby bypassing the need for expensive automatic differentiation.
% \item Via examples, including a two-dimensional fluid flow, we demonstrate improved accuracy and robustness over existing state-of-the-art methods.
% \end{highlights}

%% Keywords
\begin{keywords}
model order reduction \sep data-driven reduced-order models \sep globally-stable reduced-order models \sep oblique projection \sep Operator Inference \sep NiTROM
\MSC 34D20 \sep 34D23 \sep 37M05 \sep 37M10 \sep 37N10
\end{keywords}

\maketitle

%% main text
%%

\section{Introduction}

Low-dimensional models of real-world processes enable important scientific tasks, including the rapid prototyping of engineering devices, development of control strategies, and acceleration of otherwise computationally expensive simulations. 
Many high-dimensional systems across physics and engineering exhibit seemingly complex behavior that can nonetheless be explained by much simpler low-dimensional models \cite{mohammed-taifourUnsteadinessLargeTurbulent2016,holmes2012turbulence}.
% The goal of these ``reduced-order models'' is to identify these structures and reduce the computational expense in predicting responses of the governing equations, for use in estimation, control, or generating additional insights.
These models usually contain the following components: a map from the high-dimensional state space to a low-dimensional space (i.e., an encoder), a map from the low-dimensional space to the original high-dimensional space (i.e., a decoder), and possibly nonlinear reduced-order dynamics that evolve the reduced-order state in the latent space.
The choice of these components fully determines the predictive accuracy and temporal stability of the resulting models, and must therefore be chosen carefully.
Here, we provide a brief review of methods where the encoder and decoder define linear projection operators, and we then review recent developments to strongly enforce global stability of the latent-space dynamics.
% We then provide an overview on guaranteeing stability of latent-space dynamics, and recent work in coupling the two concepts.

% There exist several techniques to obtain such models, but here, we narrow our focus to that of projection-based model order reduction \cite{bennerSurveyProjectionBasedModel2015}. 
% The most common category of reduced-order models that rely on linear projection operators are known as Petrov-Galerkin models.
Petrov-Galerkin models are perhaps the most well-known models relying on linear projection operators.
Given a decoder $\bfphi (\bfpsi^\top \bfphi)^{-1}$ and an encoder $\bfpsi^\top$, where $\bfphi$ and $\bfpsi$ are tall rectangular matrices, the aforementioned projection is given by $\PP = \bfphi(\bfpsi^\top \bfphi)^{-1} \bfpsi^\top$.
% A graphical example of the utility of these operators is shown in Figure 1 of \cite{rowley_model_2017}, where projections spanning two-dimensional planes approximate dynamics evolving in a three-dimensional space.
If $\bfphi = \bfpsi$, then the projection $\PP$ is orthogonal and the resulting model is known as a Galerkin model.
A very common approach to identify a candidate subspace for Galerkin models is Proper Orthogonal Decomposition (POD), first applied to the study of complex fluid flows by Lumley \cite{lumley1967structure,berkoozProperOrthogonalDecomposition1993}.
Despite the widespread use of POD-Galerkin models \cite{rowley_model_2004,baroneStableGalerkinReduced2009}, it is well-known that these often fail to be sufficiently accurate in systems exhibiting large-amplitude transient growth and high sensitivity to low-amplitude states or disturbances.
This is due to the fact that orthogonal projections onto high-energy/high-variance subspaces often truncate the dynamically-important mechanisms that are necessary to explain/predict the dynamics of the full-order system.
% The disadvantage of these POD-Galerkin models is exhibited in systems that exhibit large-amplitude transient growth.
% These systems are characterized by their non-normal linear dynamics, which require the use of carefully chosen oblique projection operators.
In linear systems, or nonlinear systems evolving in the proximity of an equilibrium, methods such as Balanced Truncation \cite{moore2003principal,willcox2002balanced} or Balanced POD \cite{rowley2005model} address this issue by computing oblique projections and corresponding Petrov-Galerkin models by balancing the observability and controllability Gramians associated with the underlying linear state space model.
Beyond balancing, reduced-order models can be obtained by minimizing the $\mathcal{H}_2$ and $\mathcal{H}_\infty$ norms of the error between the full-order and reduced-order transfer functions of the associated linear system \cite{antoulas2020interpolatory,van2008h2}.
For nonlinear systems evolving far away from an attractor, methods such as Least-Squares Petrov-Galerkin (LSPG) \cite{carlberg2011efficient,carlberg2017galerkin} can be used to compute an optimal decoder by solving a least-squares problem in the reduced-order space at each time step via an encoder specified by POD.
Recently developed data-driven methods such as Trajectory-based Optimization of Oblique Projections (TrOOP) \cite{otto_optimizing_2022} and Covariance Balancing Reduction using Adjoint Snapshots (CoBRAS) \cite{ottoModelReductionNonlinear2023,Zanardi2025,Zanardi2026} compute optimal (or quasi-optimal) oblique projection operators for Petrov-Galerkin modeling.
In particular, TrOOP identifies oblique projections by optimizing against trajectories from the full-order model, while CoBRAS identifies oblique projections by balancing the state and gradient covariance matrices associated with the full-order solution map.
By definition, Petrov-Galerkin formulations are intrusive, as they require explicit access to the full-order dynamics for model evaluation. 
Additionally, modeling formulations capable of computing oblique projection (e.g., balanced truncation, LSPG, TrOOP, etc.) require access to the linearization (and its adjoint) of the nonlinear dynamics evaluated around (possibly time-varying) base flows.
This significantly limits the applicability of these methods to real-world problems where the physics are simulated using commercial or legacy solvers with proprietary source codes nor are applicable to systems known only through available data.
% These can be difficult or impossible to extract from general black-box solvers, such as in many commercial codes.
Furthermore, these methods are not necessarily guaranteed to produce globally-stable reduced-order models, and are therefore susceptible to finite-time blow-up.
% Furthermore, these methods are not guaranteed to produce stable ROMs (aside from the linear modeling techniques mentioned above \cite{sandberg2004balanced,kavranoglu1993characterization,pernebo1982model} or POD-Galerkin under an appropriate inner product \cite{rowley_model_2004, baroneStableGalerkinReduced2009}), which could lead to solution blow-up.

A popular method to obtain non-intrusive reduced-order models on linear subspaces is Operator Inference \cite{peherstorfer_data-driven_2016,kramer_learning_2024}.
Operator Inference fits a model of polynomial form (although non-polynomial dynamics have also been considered \cite{bennerOperatorInferenceNonintrusive2020}) by minimizing the error between time-rate of change of latent space vectors and the orthogonally-projected full-order time-derivatives along training trajectories.
This task can be turned into a linear least-squares optimization problem whose solution can be computed in closed-form, thus making Operator Inference a very convenient model reduction formulation.
In its original form, Operator Inference does not guarantee stability of the latent space dynamics, and there have been several recent developments to address this issue.
For example, recent work \cite{goyal_guaranteed_2025, gkimisisRepresentationEnergypreservingQuadratic2026} proposes a Lyapunov-stability-inspired parameterization of the latent-space dynamics, thus guaranteeing global asymptotic stability by construction.
% We shall see that this approach is related to our formulation, and we refer to it henceforth as Globally-asymptotically-stable Operator Inference (GasOpInf).
In a similar fashion, \cite{gruber2025variationally,sharma2022hamiltonian,sharma2024lagrangian,issan2023predicting,shuai2025} extended the Operator Inference framework to leverage known continuous symmetries (e.g., translation invariance) and/or enforce Hamiltonian and Lagrangian structure in the latent space.
While these methods solve the stability issue at its root, they all achieve dimensionality reduction via orthogonal projection. 
As previously discussed, this can accidentally truncate low-energy, yet dynamically-important, states, ultimately leading to models with poor predictive accuracy.
% One of the main drawbacks associated with all of these methods is due to the orthogonal projection of the data, which produce poor predictions due to truncating out the dynamically-important states necessary for prediction of the full-order system.
The recently-developed ``Non-intrusive Trajectory-based optimization of Reduced-Order Models'' (NiTROM) \cite{padovan_data-driven_2024} approach addresses this by solving an optimization problem where the latent-space dynamics \emph{and} the (oblique) projection operators are computed simultaneously.
While this procedure is more computationally intensive than Operator Inference, treating the projection as an optimization parameter allows for the reduced model to detect the sensitivity mechanisms of the full-order governing equations, thus significantly improving its forecasting accuracy.
However, in its original form, NiTROM does not enforce any type of structure or stability constraints on the latent-space dynamics, and the resulting models can be susceptible to finite-time blow up.
% This has been shown to improve predictions of systems exhibiting large-amplitude transient growth, but does not enforce any form of stability on the latent-space dynamics.

In this paper, we build upon the NiTROM formulation \cite{padovan_data-driven_2024} and the recent work in \cite{goyal_guaranteed_2025}, and we propose a novel non-intrusive framework that simultaneously addresses the aforementioned orthogonal projection and stability problems.
In particular, given training trajectories from a full-order model, we fit a reduced-order model by simultaneously seeking Lyapunov-constrained reduced-order dynamics, and oblique projection operators defined by a linear encoder $\bfpsi^\top$ and linear decoder $\bfphi (\bfpsi^\top \bfphi)^{-1}$.
% Our approach is enabled by considering the subspace $V = \range(\bfphi)$, which is naturally an element of the Grassmann manifold, and by taking the matrix $\bfpsi$ to be an element of the orthogonal Stiefel manifold.
% Our approach is enabled by treating each parameter as an element of an associated Riemannian manifold, and then optimizing over their combined product manifold.
We shall see that the model can be parameterized over a Riemannian product manifold composed of a Grassmannian, a Stiefel manifold, and as many linear spaces as the tensors required to enforce Lyapunov stability on the latent-space dynamics.
% Furthermore, we optimize over the parameterized tensors of the linear and quadratic latent-space dynamics as in \cite{goyal_guaranteed_2025} such that the trajectories resulting from the model are guaranteed to be globally asymptotically stable in a Lyapunov sense.
Despite the apparent complexity, we also show that the gradient of the objective function with respect to the parameters can be written in closed form via adjoint-based backpropagation in the latent space, bypassing the need for automatic differentiation during the training process.
We test our formulation on two different examples: a simple three-dimensional system of ordinary differential equations, and the two-dimensional incompressible lid-driven cavity flow at Reynolds number $Re=8300$.
In these examples, we compare our framework with POD-Galerkin, Operator Inference, the globally-stable Operator Inference formulation of \cite{goyal_guaranteed_2025}, and NiTROM. 
For these examples, our models demonstrate better predictive accuracy than orthogonal projection-based models, and even outperform the NiTROM models, which exhibit finite-time blow-up in some cases.

\section{Mathematical formulation}
We start with a general nonlinear system with dynamics governed by
\begin{equation} \label{eq:goveq}
    \begin{aligned}
        \frac{\rmd \bfx}{\rmd t} &= \bff(\bfx, \bfu), \quad \bfx(0) = \bfx_0 \\
        \bfy &= \bfh(\bfx),
    \end{aligned}
\end{equation}
where $\bfx \in \R^n$ is the high-dimensional state vector, $\bfu \in \R^m$ is the external forcing input, and $\bfy \in \R^p$ is the measured output.
Henceforth, this system is referred to as the full-order model (FOM).
Our objective is to identify a reduced-order model (ROM) that can accurately predict the time-history of the output $\bfy$ in response to varying inputs $\bfu$ and initial conditions $\bfx_0$ at reduced computational cost.
Throughout, we consider ROMs of the form
\begin{equation} \label{eq:nitromsys}
    G(\bfphi, \bfpsi, \bff_r) = \begin{cases} \begin{aligned}
        \frac{\rmd \bfz}{\rmd t} &= \bff_r (\bfz, \bfu), \quad \bfz(0) = \bfpsi^\top \bfx_0, \\
        \hat{\bfy} &= \bfh \left(\bfphi (\bfpsi^\top \bfphi)^{-1} \bfz\right),
    \end{aligned}
    \end{cases}
\end{equation}
where $\bfz\in\mathbb{R}^r$ is the low-dimensional state vector (with $r\ll n$), $\bff_r:\mathbb{R}^r\times \mathbb{R}^m\to\mathbb{R}^r$ defines the latent-space (or reduced-order) dynamics, $\mathcal{D} = \bfphi (\bfpsi^\top \bfphi)^{-1}: \mathbb{R}^r\to\mathbb{R}^n$ is the so-called decoder, while $\mathcal{E}=\bfpsi^\top:\mathbb{R}^n\to\mathbb{R}^r$ is the encoder.
We emphasize that $\mathcal{E}$ is a left-inverse of $\mathcal{D}$ (i.e., $\mathcal{E}\mathcal{D}=I\in\bm{R}^{r\times r}$), so that the encoder/decoder pair defines a linear (oblique) projection operator $\mathbb{P}\coloneqq \mathcal{D}\mathcal{E}$ by construction.
% We also note in passing that the ROM $G$ in equation \eqref{eq:nitromsys} becomes a so-called Petrov-Galerkin model if 
% \begin{equation}
%     \bff_r(\bfz,\bfu) = \mathcal{E} \bff\left(\mathcal{D}\bfz,\bfu\right).
% \end{equation}
Clearly, the ROM $G$ in \eqref{eq:nitromsys} is fully defined by the parameters $\bfphi$, $\bfpsi$, and $\bff_r$, whose choice determines the predictive accuracy of the model as well as its stability and structural properties.
The primary concern of this paper is to identify \emph{non-intrusive} ROMs with \emph{dynamically-optimal} encoder/decoder pairs, and \emph{globally-asymptotically-stable} latent-space dynamics.
We build up towards this objective in the following subsections.

\subsection{Globally-asymptotically-stable latent-space dynamics} 
\label{sec:formulation}
In this subsection, we review recent work by \cite{gillis_computing_2017} and \cite{goyal_guaranteed_2025} to guarantee global stability via a careful Lyapunov-based parameterization of the latent-space dynamics $\bff_r$.
First, we recall that a system of the form \eqref{eq:nitromsys} (with $\bfu = 0$) is said to be Lyapunov-stable (or globally stable in the sense of Lyapunov) if there exists a positive function $V(\bfz)$ satisfying $dV(\bfz)/dt \leq 0$ for all $\bfz$.
The system is said to be globally \emph{asymptotically} stable if the condition holds with a strict inequality.
If we consider Lyapunov functions of the form $V(\bfz) = \bfz^\top \widetilde{\Q}\bfz$ with $\widetilde{\Q}$ a symmetric positive-definite matrix of appropriate size, then it can be shown straightforwardly that the system is Lyapunov-stable if and only if $C \coloneqq \bfz^\top \widetilde{\Q} \bff_r(\bfz) +  \bff_r(\bfz)^\top \widetilde{\Q}\bfz \leq 0$.

In order to strongly enforce this inequality constraint within a data-driven optimization framework, we assume polynomial (specifically, quadratic) dynamics of the form
\begin{equation}
\label{eq:quadsys}
    \bff_r(\bfz,\bfu) = \A\bfz + \H : \bfz\bfz^\top + \bm{B}\bfu,
\end{equation}
where $\A$, $\H$, and $\B$ are reduced-order tensors of appropriate dimension.
Following \cite{goyal_guaranteed_2025}, the strategy to strongly enforce global asymptotic stability is to identify tensors $\A$ and $\H$ satisfying
\begin{equation}
\label{eq:lyap_inequality}
    \begin{aligned}
        C = \underbrace{\bfz^\top \left(\widetilde{\Q}\A+\A^\top\widetilde{\Q}\right) \bfz}_{\coloneqq C_A < 0}
        +  \underbrace{\bfz^\top \widetilde{\Q} \H : \bfz\bfz^\top + \left(\H : \bfz\bfz^\top\right)^\top\widetilde{\Q} \bfz}_{\coloneqq C_H = 0} < 0\quad\forall\bfz.
    \end{aligned}
\end{equation}
In words, the linear dynamics defined by $\A$ are responsible for the asymptotic decay of the state $\bfz$ towards the origin, while the quadratic dynamics defined by $\H$ merely satisfy an energy-conservation equality.
In fact, we may say that the quadratic dynamics are energy-conserving, and they neither introduce nor remove energy from the system.
We emphasize that this strategy can be extended straightforwardly to higher-order polynomial dynamics by requiring that all higher-order tensors governing the cubic, quartic, etc. dynamics satisfy an analogous energy-conservation constraint.

At this point, in order to parameterize the tensors $\A$ and $\H$ to satisfy inequality~\eqref{eq:lyap_inequality} by construction, we invoke two results from \cite{gillis_computing_2017} and \cite{goyal_guaranteed_2025}.
The first result, reported in Proposition \ref{prop:stableA}, identifies a convenient parameterization for all (stable) matrices $\A$ satisfying the inequality $C_A$ in \eqref{eq:lyap_inequality}.
The second result in Proposition \ref{prop:stableH}, on the other hand, determines that all energy-conserving tensors $\H$ satisfying the equality $C_H$ in \eqref{eq:lyap_inequality} can be parameterized using skew-symmetric frontal slices.

\begin{prop}[Lemma 2 in \cite{gillis_computing_2017}]
    \label{prop:stableA}
    A matrix $\A \in\mathbb{R}^{r\times r}$ is Hurwitz (i.e., stable) if and only if it can be written as
    \begin{equation} \label{eq:linstable}
        \A = (\widetilde{\K} - \widetilde{\MR})\widetilde{\Q}
    \end{equation}
    for some skew-symmetric matrix $\widetilde{\K}$, positive semi-definite matrix $\widetilde{\MR}$, and positive-definite matrix $\widetilde{\Q}$.
\end{prop}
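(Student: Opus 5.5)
The plan is to prove the two directions separately, using the quadratic Lyapunov function $V(\bfz)=\bfz^\top\widetilde{\Q}\bfz$ from the preceding discussion as the link. One caveat comes first. With $\widetilde{\MR}$ only positive \emph{semi}-definite, the factorization cannot guarantee that $\A$ is Hurwitz: taking $\widetilde{\MR}=0$ gives $\A=\widetilde{\K}\widetilde{\Q}$, which is similar to a skew-symmetric matrix and so has purely imaginary eigenvalues. I will therefore prove the statement in the form in which it holds, in line with Lemma 2 of \cite{gillis_computing_2017}:
\begin{itemize}
\item[(i)] every Hurwitz $\A$ admits such a factorization, and in fact one with $\widetilde{\MR}$ positive definite;
\item[(ii)] every matrix of the form $(\widetilde{\K}-\widetilde{\MR})\widetilde{\Q}$ is Lyapunov stable;
\item[(iii)] such a matrix is Hurwitz whenever $\widetilde{\MR}\succ 0$.
\end{itemize}
This is the version actually needed to enforce $C_A<0$ in \eqref{eq:lyap_inequality}.

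\emph{Necessity (Hurwitz $\Rightarrow$ factorization).} If $\A$ is Hurwitz, the Lyapunov equation $\A^\top\bm{P}+\bm{P}\A=-I$ has a unique solution $\bm{P}$, and it is symmetric positive definite. I set $\widetilde{\Q}=\bm{P}$ and $\bm{M}=\A\bm{P}^{-1}$, so that $\A=\bm{M}\widetilde{\Q}$ trivially. A direct computation gives
\begin{equation*}
\bm{M}+\bm{M}^\top=\bm{P}^{-1}\left(\bm{P}\A+\A^\top\bm{P}\right)\bm{P}^{-1}=-\bm{P}^{-2},
\end{equation*}
which is negative definite. I then split $\bm{M}$ into its skew and symmetric parts, $\widetilde{\K}=\tfrac12(\bm{M}-\bm{M}^\top)$ and $\widetilde{\MR}=-\tfrac12(\bm{M}+\bm{M}^\top)=\tfrac12\bm{P}^{-2}\succ0$. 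This yields $\A=(\widetilde{\K}-\widetilde{\MR})\widetilde{\Q}$ with $\widetilde{\MR}$ even positive definite.

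\emph{Sufficiency.} Given $\A=(\widetilde{\K}-\widetilde{\MR})\widetilde{\Q}$, I compute $\widetilde{\Q}\A+\A^\top\widetilde{\Q}$. The skew part cancels because $\widetilde{\K}^\top=-\widetilde{\K}$, leaving $-2\widetilde{\Q}\widetilde{\MR}\widetilde{\Q}\preceq0$. Hence along $\dot{\bfz}=\A\bfz$ we have $\dot V=-2(\widetilde{\Q}\bfz)^\top\widetilde{\MR}(\widetilde{\Q}\bfz)\le0$. Since $V$ is a norm, this shows that every trajectory stays bounded, i.e.\ the system is Lyapunov stable. Spectrally, this means every eigenvalue satisfies $\mathrm{Re}\,\lambda\le0$, and eigenvalues on the imaginary axis are semisimple.

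To see the spectral statement concretely, I use the similarity $\widetilde{\Q}^{1/2}\A\widetilde{\Q}^{-1/2}=\widetilde{\Q}^{1/2}(\widetilde{\K}-\widetilde{\MR})\widetilde{\Q}^{1/2}$. For a unit eigenvector $\bfv$ of this similar matrix, $\mathrm{Re}\,\lambda=-\bfv^*\widetilde{\Q}^{1/2}\widetilde{\MR}\widetilde{\Q}^{1/2}\bfv$. This quantity is $\le0$ in general, and strictly negative when $\widetilde{\MR}\succ0$ because $\widetilde{\Q}^{1/2}$ is invertible. That gives the Hurwitz conclusion in (iii).

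The main obstacle is not computational but lies in the ``if'' direction: pinning down exactly what stability notion the semi-definite $\widetilde{\MR}$ delivers. I will handle it by stating the equivalence with ``Lyapunov stable'' and adding the positive-definite refinement for Hurwitz. I will also note that, by the necessity argument, restricting to $\widetilde{\MR}\succ0$ loses no Hurwitz matrices.
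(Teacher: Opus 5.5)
Your diagnosis is correct, and it is the main point. As printed, the ``if'' direction is false: already $\widetilde{\K}=\widetilde{\MR}=0$, $\widetilde{\Q}=I$ gives $\A=0$, which has the required form but is not Hurwitz. The paper gives no proof of its own and only cites \cite{gillis_computing_2017}. There, ``stable'' means spectrum in the closed left half-plane with semisimple imaginary-axis eigenvalues, so the gloss ``Hurwitz (i.e., stable)'' misreads the source.

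Your claims (i)--(iii) are all correctly proved. The method is essentially the argument behind the cited lemma: take a Lyapunov certificate and split $\A\widetilde{\Q}^{-1}$ into skew-symmetric and symmetric parts, with the converse coming from $V$ or your Rayleigh-quotient computation. Claims (i) and (iii) together, i.e.\ Hurwitz $\Leftrightarrow$ factorization with $\widetilde{\MR}\succ0$, are indeed what underwrites $C_A<0$. Incidentally, this means the paper's choice $\widetilde{\MR}=\MR\MR^\top$ guarantees asymptotic stability by construction only when $\MR$ is nonsingular.

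What your argument does not yet give is the equivalence promised in your last paragraph, ``Lyapunov stable $\Leftrightarrow$ factorization''. That equivalence is the actual content of the cited lemma, and you have only shown factorization $\Rightarrow$ Lyapunov stable. If $\A$ is Lyapunov stable with eigenvalues on the imaginary axis, then $\A^\top\bm{P}+\bm{P}\A=-I$ has no positive-definite solution, so your necessity step does not apply. You need a nonstrict certificate $\bm{P}\succ0$ with $\A^\top\bm{P}+\bm{P}\A\preceq0$.

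One way to get it:
\begin{itemize}
\item Block-diagonalize over the reals, $\bm{T}^{-1}\A\bm{T}=\mathrm{diag}(\A_1,\A_2)$, with $\A_1$ Hurwitz and $\A_2$ having only semisimple imaginary eigenvalues.
\item Then $\A_2=\bm{T}_2\bm{\Sigma}\bm{T}_2^{-1}$ with $\bm{\Sigma}$ real skew-symmetric.
\item Let $\bm{P}_1$ solve $\A_1^\top\bm{P}_1+\bm{P}_1\A_1=-I$, and set $\bm{P}=\bm{T}^{-\top}\mathrm{diag}(\bm{P}_1,\bm{T}_2^{-\top}\bm{T}_2^{-1})\bm{T}^{-1}$.
\item This gives $\A^\top\bm{P}+\bm{P}\A=\bm{T}^{-\top}\mathrm{diag}(-I,0)\bm{T}^{-1}\preceq0$.
\end{itemize}
Your splitting then goes through verbatim, now with $\widetilde{\MR}=-\tfrac12\bm{P}^{-1}(\A^\top\bm{P}+\bm{P}\A)\bm{P}^{-1}\succeq0$. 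Either add this step, or state only (i)--(iii), which you have fully established.
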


\begin{prop}[Corollary 4.1, Lemma 4.2 in \cite{goyal_guaranteed_2025}] \label{prop:stableH}
    Consider the third-order tensor $\H \in \R^{r \times r \times r}$ in equation \eqref{eq:quadsys}, and let $\widetilde{\Q}$ be the positive-definite matrix in Proposition \ref{prop:stableA}. The following statements are equivalent:
    \begin{enumerate}[label=(\alph*)]
        \item $\H$ is an energy-preserving tensor,
        \item $C_H = \bfz^\top \widetilde{\Q} \H : \bfz\bfz^\top + \left(\H : \bfz\bfz^\top\right)^\top\widetilde{\Q} \bfz = 0, \quad \forall \bfz \in \R^r$.
        \item Letting $(\H)_k$ be the $k$-th frontal slice of $\H$, i.e., $(\H)_k \coloneqq (\H)_{:,:,k}$, there exists a tensor $\widetilde{\S} \in \R^{r\times r \times r}$ with skew-symmetric slice $(\widetilde{\S})_k$ such that $(\H)_k = (\widetilde{\S})_k \widetilde{\Q}$.
    \end{enumerate}
\end{prop}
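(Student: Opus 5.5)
Throughout, I read "energy-preserving" in (a) as the paper does in \eqref{eq:lyap_inequality}: the quadratic vector field $\bfz\mapsto \H:\bfz\bfz^\top$ neither injects nor removes $\widetilde{\Q}$-weighted energy. With that reading, (a)$\Leftrightarrow$(b) is definitional, and the work is to prove (b)$\Leftrightarrow$(c).

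\emph{Convention.} I will write the quadratic term slice-wise as
$$
\H:\bfz\bfz^\top=\sum_{k=1}^r z_k(\H)_k\bfz .
$$
This representation is not unique: only the part of $\H_{ijk}$ symmetric in $(j,k)$ affects the vector field. Statement (c) should therefore be understood as holding for a suitable representative of the same quadratic map. Using this freedom will be the key step in the converse direction.

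\emph{(c)$\Rightarrow$(b).} This is a direct substitution. Since $C_H$ is a scalar, its two terms are equal, so
$$
C_H=2\,\bfz^\top\widetilde{\Q}\,(\H:\bfz\bfz^\top)=2\sum_k z_k\,\bfz^\top\widetilde{\Q}(\widetilde{\S})_k\widetilde{\Q}\bfz .
$$
Setting $\bfy=\widetilde{\Q}\bfz$, each summand is $z_k\,\bfy^\top(\widetilde{\S})_k\bfy$, which vanishes because $(\widetilde{\S})_k$ is skew-symmetric.

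\emph{(b)$\Rightarrow$(c).} I first remove $\widetilde{\Q}$ by the same change of variables $\bfy=\widetilde{\Q}\bfz$, which is invertible since $\widetilde{\Q}\succ0$. Define the quadratic field $g(\bfy)\coloneqq \H:(\widetilde{\Q}^{-1}\bfy)(\widetilde{\Q}^{-1}\bfy)^\top$. Because $\bfz^\top\widetilde{\Q}=\bfy^\top$, condition (b) becomes $\bfy^\top g(\bfy)=0$ for all $\bfy$.

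Next, write $g_i(\bfy)=\sum_{j,k}G_{ijk}y_jy_k$ with $G_{ijk}=G_{ikj}$. The identity $\bfy^\top g(\bfy)\equiv0$ is a vanishing cubic form. This says exactly that the full symmetrization of $G$ is zero, i.e.
$$
G_{ijk}+G_{jki}+G_{kij}=0 .
$$
Combining this cyclic identity with the symmetry in the last two indices, and relabelling $j\leftrightarrow k$, I expect to obtain
$$
\sum_{j,k}G_{jki}\,y_jy_k=-\tfrac12\, g_i(\bfy).
$$

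I then define skew-symmetric matrices $(\S)_k$ with entries $(\S)_{k,ij}=\tfrac23\,(G_{ijk}-G_{jik})$. The computation above gives $\sum_k y_k(\S)_k\bfy=\tfrac23\bigl(g+\tfrac12 g\bigr)=g(\bfy)$.

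Finally, I undo the change of variables:
$$
\H:\bfz\bfz^\top=g(\widetilde{\Q}\bfz)=\sum_l z_l\Bigl(\sum_k\widetilde{Q}_{kl}(\S)_k\Bigr)\widetilde{\Q}\bfz .
$$
So the tensor with slices $(\widetilde{\S})_l\coloneqq\sum_k\widetilde{Q}_{kl}(\S)_k$ is skew-symmetric in each slice, being a linear combination of skew matrices. It represents the same quadratic map with $(\H)_l=(\widetilde{\S})_l\widetilde{\Q}$, which is (c).

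\emph{Main obstacle.} The hardest step is the index bookkeeping in the converse. The cyclic identity has to yield precisely the factor $-\tfrac12$, so that the normalization $\tfrac23$ is correct. It also has to be made explicit that (c) holds only up to the non-uniqueness of the slice representation. The direction (c)$\Rightarrow$(b), and the handling of $\widetilde{\Q}$ through $\bfy=\widetilde{\Q}\bfz$, are routine.
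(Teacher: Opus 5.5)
Your proof is correct. It necessarily takes a different route from the paper, because the paper gives no argument here: it imports the result from \cite{goyal_guaranteed_2025} (Corollary 4.1 and Lemma 4.2 there).

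You instead give a self-contained, constructive proof. The substitution $\bfy=\widetilde{\Q}\bfz$ reduces the weighted case to $\widetilde{\Q}=\bm{I}$. The vanishing cubic form yields the cyclic identity. The slices $\tfrac23(G_{ijk}-G_{jik})$ are then an explicit skew-sliced representative, transported back by mixing slices through $\widetilde{\Q}$, which preserves skew-symmetry.

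The step you flagged does go through. With $X_i=\sum_{j,k}G_{jki}y_jy_k$, the cyclic identity plus $(j,k)$-symmetry give $X_i=-g_i-X_i$. Since also $\sum_{j,k}G_{jik}y_jy_k=X_i$, you get $\tfrac23(g_i+\tfrac12 g_i)=g_i$.

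Beyond being explicit, your route surfaces two conventions that the printed statement leaves implicit, and both are genuinely required.
\begin{itemize}
\item First, (a)$\Leftrightarrow$(b) needs ``energy-preserving'' in the $\widetilde{\Q}$-weighted sense. Under the usual Euclidean definition $\bfz^\top(\H:\bfz\bfz^\top)=0$, tensors with slices $(\widetilde{\S})_k\widetilde{\Q}$ generally violate the condition when $\widetilde{\Q}\neq\bm{I}$.
\item Second, (c) can hold only for a suitable representative of the quadratic map. For $r=2$, take the tensor with $\H_{112}=1$, $\H_{121}=-1$ and all other entries zero. It defines the zero map and so satisfies (b). Yet $(\H)_1\widetilde{\Q}^{-1}$ has a vanishing second row and $(1,2)$ entry $-(\widetilde{\Q}^{-1})_{22}\neq0$, so it is never skew for any positive-definite $\widetilde{\Q}$.
\end{itemize}
The slice parameterization used in \eqref{eq:optproblem} generically produces tensors that are not symmetric in their last two indices. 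Your representative-level reading is therefore exactly the one the method relies on. The citation, by contrast, buys only brevity and consistency with the source's definitions.
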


While these two propositions identify a pathway to strongly enforce global asymptotic stability in a quadratic dynamical system, parameterizing the tensors $\A$ and $\H$ in terms of skew-symmetric and positive (semi-)definite matrices can be inconvenient from an optimization standpoint.
In fact, skew-symmetric and positive (semi-)definite matrices do not form vector spaces.
To circumvent this problem, we move forward as suggested in \cite{goyal_guaranteed_2025} and parameterize any skew-symmetric matrix $\widetilde{\bm{K}}$ and positive semi-definite matrix $\widetilde{\bm{R}}$ as
\begin{equation}
\label{eq:matrix_param}
    \widetilde{\bm{K}} = \bm{K} - \bm{K}^\top,\quad \widetilde{\MR} = \MR\MR^\top,
\end{equation}
respectively, where $\bm{K}$ and $\MR$ are unconstrained square matrices that lend themselves to straightforward vector calculus and optimization.
The authors in \cite{goyal_guaranteed_2025} proposed an analogous parameterization for positive-definite matrices (i.e., $\widetilde{\Q} = \Q\Q^\top$, where $\Q$ is unconstrained), but they observed that gradient-descent-like algorithms had a tendency of converging to rank-deficient matrices $\Q$, ultimately leading to positive \emph{semi}-definite matrices $\widetilde{\Q}$.
In other words, their parameterization often led to decompositions that do not meet the requirements of Proposition \ref{prop:stableA}.
To solve this problem, we propose the following parameterization
\begin{equation}
\label{eq:pd_param}
    \widetilde{\Q} = \Q^{-1}\Q^{-\top},
\end{equation}
for an unconstrained $\Q$. 
This equality holds so long as $\Q$ is full rank, so this parameterization guarantees positive definiteness by construction.
Finally, using the results in Propositions \ref{prop:stableA} and \ref{prop:stableH}, and the proposed parameterizations, the latent-space dynamics may be expressed by equation~\eqref{eq:quadsys} with the additional requirement
\begin{equation}
    \A = \left(\K - \K^\top - \MR\MR^\top\right)\Q^{-1}\Q^{-\top},\quad \H_{ijk} = \Bigl(\S_{ilk} - \S_{kli}\Bigr)\Q^{-1}_{la}\Q^{-1}_{ja}.
\end{equation}
This guarantees global asymptotic stability of the reduced-order model.

\subsection{GasNiTROM}
Building upon the recently-introduced NiTROM framework~\cite{padovan_data-driven_2024} and the results in the previous section, we now develop a formulation to compute non-intrusive ROMs with dynamically-optimal encoder/decoder pairs and globally asymptotically stable latent-space dynamics.
We refer to this method as GasNiTROM (Globally-asymptotically-stable Non-intrusive Trajectory-based optimization of Reduced-Order Models).
% The objective of the NiTROM formulation is to compute dynamically-optimal reduced-order models of the form~\eqref{eq:nitromsys} by simultaneously optimizing encoder/decoder pairs and latent-space dynamics against a forecasting loss function.
Before stating the optimization problem, it is useful to expose some properties of the ROM $G(\bfphi,\bfpsi,\bff_r)$ in equation \eqref{eq:nitromsys}.
Specifically, as discussed in \cite{padovan_data-driven_2024}, it may be verified that the ROM $G$ is invariant under a rotation and scaling of the matrix $\bfphi$ (i.e., $G(\bfphi\W, \bfpsi, \bff_r) = G(\bfphi, \bfpsi, \bff_r)$ for any invertible $r\times r$ matrix $\W$).
This suggests that $G$ is a function of the $r$-dimensional subspace $V = \text{span}(\bfphi)$ rather than~$\bfphi$ itself.
Therefore, in the mathematical formulation of the problem, instead of seeking optimal frames $\bfphi$ of size $n\times r$, we will leverage this symmetry by parameterizing the ROM $G$ over the Grassmann manifold $\G_{n,r}$ of $r$-dimensional subspaces of $\R^n$.
% In the mathematical formulation of the problem, we will leverage the fact that $r$-dimensional subspaces of $\R^n$ live naturally on the Grassmann manifold $\G_{n,r}$ \cite{absil2008optimization}.
While an analogous symmetry does not hold for~$\bfpsi$, a necessary (but not sufficient) condition for the product~$\bfpsi^\top\bfphi$ to be invertible (see equation \eqref{eq:nitromsys}) is for~$\bfpsi$ to have full column rank.
To satisfy this condition, we therefore treat $\bfpsi$ as an element of the Stiefel manifold $S_{n,r}$, which is the space of orthonormal (and hence full-column-rank) frames of size $n\times r$.
We are now ready to state the GasNiTROM optimization problem.
Given ground-truth measurements $\bfy(t_i)$ sampled at times $t_i$ from the FOM in equation \eqref{eq:goveq}, we solve 
\begin{equation}
\label{eq:optproblem}
    \begin{aligned}
        \min_{\left(V,\bfpsi,\K,\MR,\Q,\S,\B\right) \in \M}\,\,\, J &=  \sum_{i=0}^{N-1}\lVert \bfy(t_i) - \hat{\bfy}(t_i)\rVert^2 \\
        \text{subject to:} \quad \frac{\rmd \bfz}{\rmd t} &= \bff_r(\bfz,\bfu)\coloneqq \A\bfz + \H:\bfz\bfz^\top + \B\bfu,\quad \bfz(t_0) = \bfpsi^\top \bfx(t_0) \\
            \A &= \left(\K - \K^\top - \MR\MR^\top\right)\Q^{-1}\Q^{-\top}\\
            \H_{ijk} &= \Bigl(\S_{ilk} - \S_{kli}\Bigr)\Q^{-1}_{la}\Q^{-1}_{ja}\\
            \hat{\bfy} &= \bfh\left(\bfphi\left(\bfpsi^\top \bfphi\right)^{-1}\bfz\right) \\
            V &= \text{Range}\left(\bfphi\right),
    \end{aligned}
\end{equation}
where $\M = \G_{n,r} \times S_{n,r} \times \R^{r \times r} \times \R^{r \times r} \times \R^{r \times r} \times \R^{r \times r \times r} \times \R^{r \times m}$ is the product manifold defining the optimization space.
In words, we simultaneously seek optimal encoders $\bfpsi^\top$, decoders $\bfphi\left(\bfpsi^\top\bfphi\right)^{-1}$, and globally asymptotically-stable latent-space dynamics $\bff_r$ by minimizing a forecasting loss function.

Since the program \eqref{eq:optproblem} is formulated over a nonlinear space, solving the problem using gradient-based methods requires notions from matrix manifold optimization \cite{absil2008optimization}.
Here, we provide enough details to derive a closed-form expression for the gradient of the cost function with respect to the optimization parameters, but we refer to section 2.3 in \cite{padovan_data-driven_2024} for a more thorough explanation.
The strategy is to express the gradient of $J$ in terms of matrix-/tensor-valued objects living in linear spaces (often referred to as ambient spaces) that lend themselves to straightforward vector calculus.
We begin by viewing $\M$ as a submanifold of an ambient-space manifold $\overline{\M}$ equipped with a Riemannian metric, which we shall define momentarily. 
Viewing the Stiefel manifold as an embedded submanifold of the vector space~$\R^{n \times r}$, and the Grassmann manifold as a quotient manifold of the non-orthogonal Stiefel manifold~$\R_{*}^{n \times r}$~\cite{absil2008optimization}, the ambient-space manifold may be defined component-wise as follows
\begin{equation}
    \overline{\M} = \R_{*}^{n \times r} \times \R^{n \times r} \times \R^{r \times r} \times \R^{r \times r} \times \R^{r \times r} \times \R^{r \times r \times r} \times \R^{r \times m}.
\end{equation}
% The manifolds $\R^{r \times r}$, $\R^{r \times r \times r}$, and $\R^{r \times m}$ are linear vector spaces that do not require any special treatment.
Next, it is now necessary to endow the ambient-space manifold $\overline\M$ with Riemannian metrics, which can then be inherited by $\M$.
Recalling that a Riemannian metric is a smooth family of inner products defined on the tangent spaces of the manifold $\M$,
\begin{equation}
    g_p^\M:\mathcal{T}_p\M\times \mathcal{T}_p\M \to \mathbb{R},\quad p\in\mathcal{M},
\end{equation}
we proceed as in section 2.3 in \cite{padovan_data-driven_2024} and define the following metrics for $\G_{n,r}$ and $St_{n,r}$, respectively,
\begin{equation}
    \begin{aligned}
        g_V^{\G_{n,r}}\left(\xi,\eta\right) &= g_{\bfphi}^{\R_*^{n\times r}}\left(\overline{\xi}_{\bfphi}, \overline{\eta}_{\bfphi}\right)\coloneqq \text{Tr}\left(\left(\bfphi^\top\bfphi\right)^{-1}\overline{\xi}_{\bfphi}^\top \overline{\eta}_{\bfphi}\right),\quad \xi,\eta\in\mathcal{T}_{V}\G_{n,r},\quad \overline{\xi}_{\bfphi},\,\overline{\eta}_{\bfphi}\in\mathcal{T}_{\bfphi} \R_{*}^{n\times r}\\ 
        g_{\bfpsi}^{St_{n,r}}(\xi,\eta) &\coloneqq \text{Tr}\left(\xi^\top\eta\right),\quad \xi, \eta \in \mathcal{T}_{\bfpsi}St_{n,r}.
    \end{aligned}
\end{equation}
Here, $\text{Tr}(\cdot)$ denotes the trace operator for matrices. 
The metrics for the Euclidean spaces $\mathbb{R}^{r\times r}$, $\mathbb{R}^{r\times r\times r}$, etc. are taken to be the usual tensor dot product.
Now, following \cite{padovan_data-driven_2024} and \cite{absil2008optimization}, the gradient $\nabla J$ may be expressed as
\begin{equation}
    \nabla J = \left(\nabla_{\bfphi} \overline{J}, \PP_{\bfpsi} \nabla_{\opsi} \overline{J}, \nabla_{\K} \overline{J}, \nabla_{\MR} \overline{J}, \nabla_{\Q} \overline{J}, \nabla_{\S} \overline{J}, \nabla_{\B} \overline{J}\right),\quad \left(\bfphi, \overline{\bfpsi}, \K, \MR, \Q, \S, \B\right) \in \overline\M,
\end{equation}
where $\overline{J}$ is the ambient-space cost function defined on the ambient-space manifold $\overline{\M}$, and $\mathbb{P}_{\bfpsi}$ denotes the orthogonal projection onto the tangent space of $S_{n,r}$ at $\bfpsi$ \cite{absil2008optimization}.
This equation emphasizes that the gradient of the cost function~$J$ defined over the abstract matrix manifold $\mathcal{M}$ can be conveniently computed from the gradient of $\overline{J}$.
Intuitively speaking, the function $\overline{J}$ is the analogue of $J$, except that it is viewed as a function of matrix-/tensors-valued objects on $\overline\M$.
Remarkably, the gradient of the ambient-space function $\overline{J}$ can be computed in closed form, thereby avoiding the need for potentially expensive automatic differentiation.
The closed-form expression of the gradient is provided in the proposition below.

\begin{prop} \label{prop:gradients}
    Let problem \eqref{eq:optproblem} be written as an equivalent
    unconstrained optimization problem with ambient-space Lagrangian $\overline{L}: \overline{\M} \to \R$ defined as 
    \begin{equation} \label{eq:lagrangian}
        \begin{aligned}
            &\overline{L}\left(\bfphi, \opsi, \K, \MR, \Q, \S, \B\right) =\sum_{i=0}^{N-1} \biggl\{ \overline{J}_i
            + \int_{t_0}^{t_i} \lam_i^\top \left( \frac{\rmd \bfz}{\rmd t} - \widetilde{\bff}_r\left(\bfz, \bfu\right) \right) \rmd t + \lam_i(t_0)^\top \left(\bfz(t_0) - \opsi^\top \bfx(t_0)\right)\biggl\},
        \end{aligned}
    \end{equation}
    where $\overline{J}_i = \left\lVert \bfe(t_i)\right\rVert^2$ with $\bfe(t_i) \coloneqq \bfy(t_i) - \bfh\left(\bfphi\left(\opsi^\top \bfphi\right)^{-1} \bfz(t_i)\right)$, and $\lam_i(t) \in \R^r$ is the $i$-th Lagrange multiplier with $t \in [t_0, t_i]$. Defining $\C_{j,k} \coloneqq \partial \bfh_j/\partial \bfx_k$, the gradient of $\overline{L}$ is given component-wise as
    
    \begin{align}
        \nabla_{\bfphi} \overline{L} &= \left\{ -2 \sum_{i=0}^{N-1} \left( \bm{I} - \opsi \left(\bfphi^\top \opsi\right)^{-1} \bfphi^\top\right) \C(t_i)^\top \bfe(t_i) \bfz(t_i)^\top \left(\bfphi^\top \opsi\right)^{-1}\right\} \left(\bfphi^\top \bfphi\right), \\
        \nabla_{\opsi} \overline{L} &= \sum_{i=0}^{N-1} \left(2 \bfphi \left(\opsi^\top \bfphi\right)^{-1} \bfz(t_i)\bfe(t_i)^\top \C(t_i)\bfphi \left(\opsi^\top \bfphi\right)^{-1} - \bfx(t_0) \lam_i(t_0)^\top\right), \\
        \nabla_{\K} \overline{L} &= \left(\nabla_{\A} \overline{L}\right) \Q^{-1} \Q^{-\top} - \Q^{-1} \Q^{-\top} \left(\nabla_{\A} \overline{L}\right)^\top, \label{eq:gradJ} \\
        \nabla_{\MR} \overline{L} &= -\left( \left(\nabla_{\A} \overline{L}\right) \Q^{-1} \Q^{-\top} + \Q^{-1} \Q^{-\top} \left(\nabla_{\A} \overline{L}\right)^\top \right) \MR, \\
        \nabla_{\Q} \overline{L} &= -\Q^{-\top} \left(\nabla_{\widetilde{\Q}} \overline{L} + \left(\nabla_{\widetilde{\Q}} \overline{L}\right)^\top\right) \Q^{-1} \Q^{\top}, \\
        \nabla_{\widetilde{\Q}} \overline{L} &= \left(\K - \K^\top - \MR \MR^\top\right) \nabla_{\A}\overline{L} + \sum_{k=1}^r \left(\left((\S)_k^\top - (\S)_k\right) \left(\nabla_{\H} \overline{L}\right)_k\right), \label{eq:gradQtilde} \\
        \nabla_{\left(\S\right)_k} \overline{L} &= \left(\nabla_{\left(\H\right)_k} \overline{L}\right) \Q^{-1} \Q^{-\top} - \Q^{-1} \Q^{-\top} \left(\nabla_{\left(\H\right)_k} \overline{L}\right)^\top, \quad \forall k \in \left\{1, 2, \ldots, r\right\}, \\
        \nabla_{\B} \overline{L} &= -\sum_{i=0}^{N-1} \int_{t_0}^{t_i} \lam_i \bfu^\top\,\rmd t.
    \end{align}
    Here,
    \begin{align}
        \nabla_{\A} \overline{L} = -\sum_{i=0}^{N-1} \int_{t_0}^{t_i} \lam_i \bfz^\top\,\rmd t,\quad \nabla_{\H} \overline{L} = -\sum_{i=0}^{N-1} \int_{t_0}^{t_i} \lam_i \otimes \bfz \otimes \bfz\,\rmd t,
    \end{align}
    and the Lagrange multiplier $\lam_i\left(t\right)$ satisfies the reduced-order adjoint equation
    \begin{equation}
        -\frac{\rmd \lam_i}{\rmd t} = \left[ \partial_{\bfz} \widetilde{\bff}_r \left(\bfz\right)\right]^\top \lam_i, \quad \lam_i(t_i) = 2\left(\bfphi^\top \opsi\right)^{-1} \bfphi^\top \C(t_i)^\top \bfe(t_i), \quad t \in [t_0, t_i].
    \end{equation}
\end{prop}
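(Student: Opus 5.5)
The plan is the standard adjoint (Lagrangian) calculation followed by chain rules through the parameterizations of Section~\ref{sec:formulation}. Since $\bfz$ satisfies the latent dynamics and the initial condition, the constraint terms in \eqref{eq:lagrangian} vanish along feasible trajectories. Hence $\overline{L}=\overline{J}$ there, and the total derivative of $\overline{J}$ with respect to any parameter equals the partial derivative of $\overline{L}$, provided the multipliers $\lam_i$ are chosen to annihilate every term involving the variation $\delta\bfz$.

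\textbf{Adjoint equation.} Perturb $\bfz\mapsto\bfz+\delta\bfz$ in the $i$-th summand. Integrate $\int_{t_0}^{t_i}\lam_i^\top\,\rmd\delta\bfz/\rmd t\,\rmd t$ by parts, which produces the boundary terms $\lam_i(t_i)^\top\delta\bfz(t_i)-\lam_i(t_0)^\top\delta\bfz(t_0)$. The term at $t_0$ cancels against the variation of the initial-condition term. The interior terms vanish if $-\dot{\lam}_i=[\partial_{\bfz}\widetilde{\bff}_r]^\top\lam_i$. For the remaining term at $t_i$, differentiate $\overline{J}_i=\lVert\bfe(t_i)\rVert^2$ in $\bfz(t_i)$. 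This gives $-2\bigl(\bfphi(\opsi^\top\bfphi)^{-1}\bigr)^\top\C(t_i)^\top\bfe(t_i)$, so it is cancelled by the stated terminal condition $\lam_i(t_i)=2(\bfphi^\top\opsi)^{-1}\bfphi^\top\C^\top\bfe$. With this choice, each parameter gradient reduces to explicit partial derivatives.

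\textbf{Partial derivatives.} The dynamics enter only through $-\lam_i^\top(\A\bfz+\H:\bfz\bfz^\top+\B\bfu)$. This immediately gives the stated $\nabla_{\A}\overline{L}$, $\nabla_{\H}\overline{L}$ and $\nabla_{\B}\overline{L}$.

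For $\opsi$ there are two contributions. The first is the initial-condition term, which gives $-\bfx(t_0)\lam_i(t_0)^\top$. The second comes from the decoder, via $\delta(\opsi^\top\bfphi)^{-1}=-(\opsi^\top\bfphi)^{-1}\delta\opsi^\top\bfphi(\opsi^\top\bfphi)^{-1}$. Rearranging its trace yields the first summand of $\nabla_{\opsi}\overline{L}$.

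For $\bfphi$, vary the decoder $\bfphi(\opsi^\top\bfphi)^{-1}$. The result has the oblique complementary projector $\bm{I}-\opsi(\bfphi^\top\opsi)^{-1}\bfphi^\top$ as a factor. Finally, convert from the Euclidean gradient to the Riemannian one under the metric $\text{Tr}((\bfphi^\top\bfphi)^{-1}\xi^\top\eta)$, which multiplies on the right by $\bfphi^\top\bfphi$.

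\textbf{Chain rules through the parameterizations.} Write $\widetilde{\Q}=\Q^{-1}\Q^{-\top}$ and use $\delta\overline{L}=\langle\nabla_{\A}\overline{L},\delta\A\rangle$.
\begin{itemize}
\item From $\delta\A=(\delta\K-\delta\K^\top)\widetilde{\Q}$, transposing the second term gives $\nabla_{\K}\overline{L}=\nabla_{\A}\overline{L}\,\widetilde{\Q}-\widetilde{\Q}(\nabla_{\A}\overline{L})^\top$.
\item From $\delta\A=-(\delta\MR\MR^\top+\MR\delta\MR^\top)\widetilde{\Q}$ we obtain the stated $\nabla_{\MR}\overline{L}$.
\item Each slice $(\H)_k=((\S)_k-(\S)_k^\top)\widetilde{\Q}$ is treated exactly like $\K$, giving $\nabla_{(\S)_k}\overline{L}$.
\item $\widetilde{\Q}$ appears in both $\A$ and every slice. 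Collecting $\langle\nabla_{\A}\overline{L},\widetilde{\K}\,\delta\widetilde{\Q}\rangle$ and the slice analogues yields $\nabla_{\widetilde{\Q}}\overline{L}$ in \eqref{eq:gradQtilde}, up to transposition conventions that need checking.
\item Finally, $\delta\widetilde{\Q}=-\Q^{-1}\delta\Q\,\widetilde{\Q}-\widetilde{\Q}\,\delta\Q^\top\Q^{-\top}$. Symmetrizing $\nabla_{\widetilde{\Q}}\overline{L}$ and moving the factors across the trace gives $\nabla_{\Q}\overline{L}=-\Q^{-\top}(G+G^\top)\widetilde{\Q}$ with $G=\nabla_{\widetilde{\Q}}\overline{L}$.
\end{itemize}

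\textbf{Expected obstacle.} I expect the main difficulty to be the $\Q$ chain. The parameter $\widetilde{\Q}$ enters bilinearly in both $\A$ and $\H$, so the index bookkeeping for the frontal slices in \eqref{eq:gradQtilde} and the trace rearrangement through $\Q^{-1}$ are the error-prone steps. The final factor in the stated formula for $\nabla_{\Q}\overline{L}$ must be reconciled with the $\widetilde{\Q}=\Q^{-1}\Q^{-\top}$ that the differentiation produces. I would verify these componentwise in index notation rather than in matrix form.
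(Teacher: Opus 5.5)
Your argument is the same variational chain-rule computation as the paper's appendix proof, and it is correct; you additionally carry out the adjoint and the $\bfphi$, $\opsi$ derivations that the paper only cites from the NiTROM reference. The two points you flag are misprints in the statement, not convention issues: collecting $\langle\nabla_{\A}\overline{L},(\widetilde{\K}-\widetilde{\MR})\,\delta\widetilde{\Q}\rangle$ (you dropped the $\widetilde{\MR}$) gives $(\widetilde{\K}-\widetilde{\MR})^\top\nabla_{\A}\overline{L}=(\K^\top-\K-\MR\MR^\top)\nabla_{\A}\overline{L}$ as the first term of \eqref{eq:gradQtilde}, which is what the paper's appendix itself derives, is consistent with the $(\S)_k^\top-(\S)_k$ in the slice term, and differs from the stated term even after the symmetrization in $\nabla_{\Q}\overline{L}$; and the trailing factor of $\nabla_{\Q}\overline{L}$ should be $\Q^{-1}\Q^{-\top}=\widetilde{\Q}$, as you derived, not $\Q^{-1}\Q^{\top}$.
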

\begin{proof}
    See Appendix \ref{ap:proof}.
\end{proof}
% The authors remark that the above gradients of the Lagrangian with respect to the parametrized tensors depend only on having the gradients with respect to the overall tensors $\nabla_{\A} \overline{L}$ and $\nabla_{\H} \overline{L}$, and as such it can be generalized to any gradient-based optimization method where gradients are known. 

Once the ambient-space gradient is known, gradient-based optimization on matrix manifolds can be performed using open source, readily available software like \texttt{pyManOpt} \cite{pymanopt}.
For more details regarding the theory behind matrix-manifold optimization algorithms and their implementation via retractions and vector transport, we refer the reader to~\cite{absil2008optimization}.
% During gradient-based optimization on nonlinear manifolds, the concept of a \textit{retraction} is necessary to guarantee all points computed during optimization remain on the manifold. The retraction is defined as a map $R_p : \T_p \M \to \M$ that satisfies $R_p(0) = p$ and $DR_p(0) = I_{\T_p \M}$, where $I_{\T_p \M}$ is the identity map on the tangent space $\T_p \M$ \cite{absil2008optimization}. Given a point $p \in \M$  and the gradient $\xi \in \T_p \M$ of a function $f$ defined on $\M$, the next iterate in the direction of the gradient is given by $R_p(p - \beta \xi)$, where $\beta$ is some learning rate. Valid retractions for both the Grassmann and Stiefel manifolds are given by the QR decomposition, whereas for linear manifolds the retraction is simply the identity map.
% The authors of \cite{padovan_data-driven_2024} provide an estimate of the computational cost of the NiTROM algorithm in section 2.4. Due to the similarity with NiTROM and our formulation, the process for computing the ambient-space gradient efficiently is nearly the same as algorithm 2.1 in \cite{padovan_data-driven_2024}, with minor modifications for the parametrized tensors. The computational cost also remains the same, as the cost of the matrix-matrix and matrix-vector products of the parameterized tensors is negligible compared to the cost of matrix-vector products involving $\bfphi$ and $\bfpsi$ in the case of very high dimensional full-order systems, or the ROM time stepper in all other cases.

\subsection{Connection with existing methods}
Our formulation is closely aligned with the recently-introduced NiTROM formulation \cite{padovan_data-driven_2024}, as well as with Operator Inference \cite{peherstorfer_data-driven_2016} and its globally-stable version discussed in \cite{goyal_guaranteed_2025}. Here, we provide a brief review of these methods and identify similarities and differences.

\subsubsection{NiTROM}
Within the NiTROM framework \cite{padovan_data-driven_2024}, one solves an optimization problem analogous to \eqref{eq:optproblem}, except that no constraints are imposed on the form of the latent-space tensors $\A$ and $\H$.
% In particular, the optimization problem is exactly the same as \eqref{eq:optproblem} but constrains the dynamics to be of the form \eqref{eq:quadsys}, with dynamics governed by unconstrained tensors $\A$ and $\H$.  
This implies that, in principle, an optimal solution of the NiTROM problem can exhibit infinite-/finite-time blow up.
% Consequently, the ROMs obtained from NiTROM may be stable and accurate when considering the training data set, but could lead to exponential or finite-time blow up of solutions when testing against an unseen set of data. 
The authors in \cite{padovan_data-driven_2024} showed that linear stability of the dynamics can be promoted by penalizing the Frobenius norm of the matrix exponential $e^{\A t_f}$ evaluated at some very large $t_f \gg 1$, but no stability-promoting penalties were proposed for the higher-order polynomial terms in the latent-space dynamics.
By contrast, the globally-stable NiTROM procedure proposed in this manuscript ensures global asymptotic stability by construction, and does not require the use of penalties that potentially require careful tuning.

% This can be remedied in practice by using penalty methods, where a random forcing or initial condition is used, and the energy of the resulting trajectory is added to the cost $J$. These methods can bias the optimization towards stable solutions, but it is not guaranteed to converge to one. Our approach ensures global stability in the learned operators by construction, and does not require any additional methods to ensure the stability of solutions holds past the training data set.

\subsubsection{Operator Inference}

Operator Inference (OpInf) \cite{peherstorfer_data-driven_2016} is a non-intrusive model reduction framework that seeks a reduced-order model by orthogonally projecting the data onto a low-dimensional subspace and then fitting polynomial latent-space dynamics. 
The subspace used for the projection is typically chosen as the span of the leading proper orthogonal decomposition (POD) modes generated from the full-order data set. 
In particular, given a full-order trajectory $\bfx(t_i)$ sampled from \eqref{eq:goveq} at times $t_i$, the time-derivative $\rmd \bfx(t_i)/\rmd t$, the input forcing $\bfu(t_i)$, and an $r$-dimensional subspace spanned by $\bfphi \in \R^{n \times r}$, Operator Inference solves
\begin{equation} \label{eq:opinf}
    \min_{(\A, \H, \B) \in \M_{\text{OpInf}}} \quad J_{\text{OpInf}} = \sum_{i=0}^{N-1} \left\lVert \frac{\rmd \bfz(t_i)}{\rmd t} - \A \bfz(t_i) - \H: \bfz(t_i) \bfz(t_i)^\top - \B \bfu(t_i) \right\rVert^2_2,
\end{equation}
where $\bfz(t_i) = \bfphi^\top \bfx(t_i)$ and $\M_{\text{OpInf}} \in \R^{r \times r} \times \R^{r \times r \times r} \times \R^{r \times m}$. 
Equation \eqref{eq:opinf} can be equivalently cast as a linear least-squares problem, whose solution is given via the pseudoinverse rather than via iterative gradient-based algorithms. 
While Operator Inference provides a fast and convenient non-intrusive model-reduction framework, it is well-known that latent-space identification via orthogonal projection may lead to dynamically inaccurate models, especially in systems (e.g., fluid flows) exhibiting large-amplitude transient growth and high sensitivity to external disturbances.
Additionally, users often observe that this formulation has a tendency of producing unstable ROMs, which has led to the development of Tikhonov-regularized formulations that seek to promote stability by penalizing the Frobenius norm of the tensors $\A$ and $\H$ \cite{mcquarrie_data-driven_2021, sawant_physics-informed_2023}.
The formulation of \cite{goyal_guaranteed_2025} discussed below solves the issue of stability at its root by strongly enforcing stability of the latent-space dynamics.

\subsubsection{Guaranteed-stable Operator Inference}
Building on the Operator Inference framework, \cite{goyal_guaranteed_2025} proposes a guaranteed-stable formulation by parameterizing the latent-space tensors as discussed in section \ref{sec:formulation}.
The optimization problem remains of the same form as \eqref{eq:opinf} with the additional Lyapunov-stable parameterization of the reduced-order tensors.
Following the notation of section \ref{sec:formulation}, globally-asymptotically-stable Operator Inference (GasOpInf) solves
\begin{equation} \label{eq:gasopinf}
    \begin{aligned}
        \min_{(\K, \MR, \Q, \S, \B) \in \M_{\text{GasOpInf}}} \quad J_{\text{GasOpInf}} &= \sum_{i=0}^{N-1} \left\lVert \frac{\rmd \bfz(t_i)}{\rmd t} - \A \bfz(t_i) - \H: \bfz(t_i) \bfz(t_i)^\top - \B \bfu(t_i) \right\rVert^2_2 \\
        \text{subject to:} \quad \A &= \left( \K - \K^\top - \MR \MR^\top \right) \Q^{-1} \Q^{-\top} \\
        \H_{ijk} &= \left(\S_{ilk} - \S_{kli} \right) \Q_{la}^{-1} \Q_{ja}^{-1},
    \end{aligned}
\end{equation}
where $\bfz(t_i) = \bfphi^\top \bfx(t_i)$ and $\M_{\text{GasOpInf}} \in \R^{r \times r} \times \R^{r \times r} \times \R^{r \times r} \times \R^{r \times r \times r} \times \R^{r \times m}$.
Contrary to classical Operator Inference, \eqref{eq:gasopinf} is nonlinear in the parameters, therefore requiring iterative gradient-based algorithms.
While this formulation was introduced in \cite{goyal_guaranteed_2025}, we emphasize two minor contributions provided in this paper that augment the original GasOpInf.
First, as discussed previously, the authors in \cite{goyal_guaranteed_2025} parameterized the positive-definite matrix $\widetilde{\Q}$ in Proposition \ref{prop:stableA} as $\widetilde{\Q} = \Q\Q^\top$, which often leads to degenerate positive semi-definite matrices $\widetilde{\Q}$ when $\Q$ loses rank.
Instead, here we write $\widetilde{\Q} = \Q^{-1}\Q^{-\top}$, as shown in the first constraint in equation \eqref{eq:gasopinf}.
Since this expression is well-defined only for full-rank $\Q$, positive-definiteness of $\widetilde{\Q}$ is enforced by construction without any degeneracies.
Second, while the authors in \cite{goyal_guaranteed_2025} solve the optimization problem using automatic differentiation, we provide a convenient closed-form expression for the gradients of $J_{\text{GasOpInf}}$, which may be evaluated at lower computational cost, thereby accelerating the training phase. 
These gradients are provided in Appendix \ref{ap:opinf_grads}.
The GasOpInf formulation solves the previously-discussed stability issues of the classical Operator Inference formulation, but much like Operator Inference, it relies on orthogonal projections for dimensionality reduction.
% Additionally, Tikhonov regularization on the tensors $\A$ and $\H$ can be used in this formulation, but unlike classical Operator Inference, the regularization here works to avoid overfitting of the ROM.
As previously discussed, this can lead to inaccurate reduced-order models, especially when the full-order model exhibits large-amplitude transients and high sensitivity to disturbances.

% It is apparent that our model reduction framework builds upon the concept of NiTROM, embedding global stability of the resulting reduced-order model \eqref{eq:nitromsys} by parametrization of the latent-space dynamical operators. Furthermore, our formulation is more robust to the normality of the full-order dynamics due to the simultaneous optimization of latent-space dynamics with oblique projection matrices, leading to a more accurate ROM than if the same parameterization approach was performed with the Operator Inference framework. 

\section{Application to a toy model} \label{sec:toymodel}
In this section, we apply GasNiTROM to a three-dimensional toy model, and we compare with the intrusive POD-Galerkin formulation, classical and guaranteed-stable Operator Inference, and classical NiTROM. 
The model is governed by the following equations:
\begin{equation}\label{eq:toymodel}
    \begin{aligned}
        \dot{x}_1 &= -x_1 + \nu x_1 x_3 + u, \\
        \dot{x}_2 &= -2 x_2 + \nu x_2 x_3 + u, \\
        \dot{x}_3 &= -5 x_3 + u, \\
        y &= x_1 + x_2 + x_3,
    \end{aligned}
\end{equation}
where $\dot{x}_1 = \rmd x_1/\rmd t$ and $\nu$ is a coupling parameter. 
If $\nu$ is small, then these dynamics are effectively linear and governed by a diagonal linear operator. 
By contrast, if $\nu$ is large, not only do the dynamics become more heavily nonlinear, but the dynamics of $x_1$  and $x_2$ depend strongly on a rapidly-vanishing $x_3$.
Systems exhibiting high-sensitivity to low-energy/rapidly-vanishing states are precisely those where ROMs obtained via orthogonal projection are more likely to give inaccurate and unstable predictions.
In these cases, not only is it advisable to seek ROMs with guaranteed-stable latent-space dynamics, but it is also necessary to construct appropriate encoder/decoder pairs capable of detecting the strong dependence of high-energy/slowly-decaying states (here, $x_1$ and $x_2$) on low-energy/rapidly-decaying states (here, $x_3$).
For demonstrative purposes, we consider a large $\nu = 20$, and we seek two-dimensional ROMs capable of predicting the time history of the output $y$ in response to scalar inputs $u(t) = \gamma H(t)$, where $H(t)$ is the Heaviside step function centered at $t=0$, and $\gamma \in (0, 1/4)$. 
In particular, we seek quadratic ROMs of the form
\begin{equation}
    \begin{aligned}
        \frac{\rmd \bfz}{\rmd t} &= \A \bfz + \H: \bfz\bfz^\top + \bfpsi^\top \bfu, \\
        \hat{y} &= \C \bfphi \left(\bfpsi^\top \bfphi\right)^{-1} \bfz,
    \end{aligned}
\end{equation}
where $\C = \begin{bmatrix} 1 & 1 &1 \end{bmatrix}$ is a row vector and $\bfu = \left(u,u,u\right)$.

We train the model in the same manner as \cite{padovan_data-driven_2024}, collecting trajectory data from $N_{\text{traj}}=4$ step responses generated with $\gamma = \{0.01, 0.1, 0.2, 0.248\}$ and initialized from the unforced base-flow. 
For each trajectory, we sample at $N=100$ equally-spaced times $t_i \in [0, 10]$. 
The cost function for NiTROM and GasNiTROM is
\begin{equation}\label{eq:cost_nit}
    J = \sum_{j=0}^{N_{\text{traj}}-1} \frac{1}{\alpha_j} \sum_{i=0}^{N-1} \left\lVert y^{(j)}(t_i) - \hat{y}^{(j)}(t_i)\right\rVert^2,
\end{equation}
with $\alpha_j = N_{\text{traj}} N \lVert \C \overline{\bfx}^{(j)} \rVert^2$, where $\overline{\bfx}^{(j)}$ is the steady state of trajectory $j$ in response to the step input magnitude $\gamma^{(j)}$. 
Both methods were initialized with $\bfpsi = \bfphi$ given by the leading two POD modes $\bfphi$ computed from the training step responses. The NiTROM reduced-order dynamics were initialized via Galerkin projection of the full-order dynamics onto the POD modes, while the GasNiTROM tensors were initialized using the parameterized tensors from GasOpInf.
The cost function for classical and guaranteed-stable Operator Inference is
\begin{equation}
    J = \sum_{j=0}^{N_{\text{traj}}-1} \frac{1}{\alpha_j} \sum_{i=0}^{N-1} \left\lVert \frac{\rmd \bfz^{(j)}(t_i)}{\rmd t} - \A \bfz^{(j)}(t_i) - \H:\bfz^{(j)}(t_i) \bfz^{(j)}(t_i)^\top - \bfphi^\top \bfu^{(j)}(t_i) \right\rVert^2 + \lambda \left\lVert \mat(\H) \right\rVert^2_F,
\end{equation}
where $\bfz=\bfphi^\top \bfx$, $\mat(H)$ denotes the matricization of the rank-3 tensor $\H$, and $\lambda$ is the regularization parameter. 
For classical Operator Inference, $\lambda \approx 10^{-7}$, and for guaranteed-stable Operator Inference, $\lambda \approx 10^{-8}$, chosen (approximately) to yield the model that minimizes the cost function in \eqref{eq:cost_nit}.
We emphasize that the regularization term in the GasOpInf optimization problem is used exclusively to avoid overfitting and not to promote stability, which is already enforced by construction.
The GasOpInf optimization was initialized by projecting the latent-space tensors from POD-Galerkin onto the stable subspace, and setting $\Q = \bm{I}$ where $\bm{I}$ is the identity matrix. 
It is important to note that the time-derivative $\rmd \bfz(t_i)/\rmd t$ is computed exactly, i.e. $\rmd \bfz(t_i)/\rmd t = \bfphi^\top \bff(\bfx(t_i))$, where $\bff$ denotes the right-hand side of the full-order dynamics. 
For all methods, the optimization was performed using the limited-memory Broyden–Fletcher–Goldfarb–Shanno (L-BFGS) algorithm \cite{goldfarb_family_1970} available in the \verb|PyTorch| library, and accelerated using GPUs, with the ambient-space gradient defined following Proposition \ref{prop:gradients}. 
Natively, \verb|PyTorch| does not support optimization on manifolds, so concepts such as \textit{metrics}, \textit{retractions}, and \textit{vector transport} \cite{absil2008optimization} were implemented manually using the \verb|PyTorch| backend.

The models were tested by generating 100 step-response trajectories with $\gamma$ sampled uniformly at random from the interval $(0, 1/4)$. The results are shown in Figure \ref{fig:toymodel_error}, where the average error over all trajectories $e(t)$ is defined as 
\begin{equation}\label{eq:toymodel_error}
    e(t) = \frac{1}{N_{\text{traj}}} \sum_{j=0}^{N_{\text{traj}}-1} \frac{1}{\alpha_j} \left \lVert y^{(j)}(t) - \hat{y}^{(j)}(t) \right \rVert^2,
\end{equation}
with $\alpha_j$ as in \eqref{eq:cost_nit}. Figure \ref{fig:toymodel_error} shows the testing error for two temporal windows: Figure \ref{fig:toymodel_err_10} is the error for all times within the window that was used for training ($t \in [0, 10]$), whereas Figure \ref{fig:toymodel_err_30} extends this window past the final training time ($t \in [0, 30]$) to determine the behavior of each model during temporal extrapolation. 
The GasNiTROM model obtains a lower overall error than the classical NiTROM model in Figure \ref{fig:toymodel_err_10}, and in Figure \ref{fig:toymodel_err_30} it can be seen that the NiTROM model becomes unstable, whereas the GasNiTROM model continues to attain a low average error. 
A more interesting result can be observed in Figure \ref{fig:toymodel_testing}, where we test the forecasting capabilities of the models on sinusoidal forcing functions of varying amplitude (recall that we have not trained on sinusoids). 
Two input amplitudes were considered for this study: a ``low'' amplitude of 0.45, and a ``high'' amplitude of 0.65. For the low amplitude case in Figure \ref{fig:toymodel_low_30}, all models remain stable for all time, with the GasNiTROM, GasOpInf, and NiTROM predictions attaining the smallest deviations from the full-order model. 
We emphasize that the NiTROM and GasNiTROM forecasts are the most accurate, since they account for the ``non-normal'' dynamics of the full-order model by identifying appropriate encoder/decoder pairs defining oblique projection operators.
By contrast, the GasOpInf framework, which relies on orthogonal projections onto the most energetic features of the full-order model, consistently exhibits higher error.
When the amplitude is increased in \ref{fig:toymodel_high_30}, POD-Galerkin, classical Operator Inference, and classical NiTROM all produce unstable trajectories, whereas GasOpInf and GasNiTROM remain stable, as expected.
% Of course, this is attributed to the parametrization of the latent-space dynamics, guaranteeing stability by construction.
While both the GasOpInf and GasNiTROM formulations are operating well outside their training regime and exhibit large errors, the GasNiTROM models is consistently more accurate, especially at predicting the peaks at times $t \in \{2, 8, 14,\ldots\}$.
% We emphasize that while both the GasOpInf and GasNiTROM formulations are operating well-outside their training regimes and exhibit substantial errors, the GasNiTROM framework remains visibly more accurate.
Once again, we attribute this feature to the fact that NiTROM-like architectures seek oblique-projection operators capable of detecting low-/high-energy coupling in the FOM dynamics, thus leading to more accurate ROMs.

\begin{figure}
    \centering
    \begin{subfigure}[b]{0.345\textwidth}
        \centering
        \includegraphics[width=\textwidth]{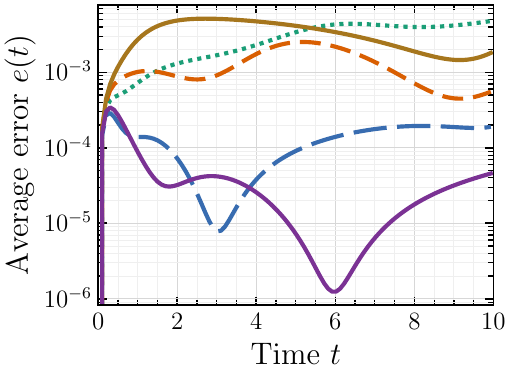}
        \caption{$t_{\text{final}}=10$}
        \label{fig:toymodel_err_10}
    \end{subfigure}\hfill
    \begin{subfigure}[b]{0.615\textwidth}
        \centering
        \includegraphics[width=\textwidth]{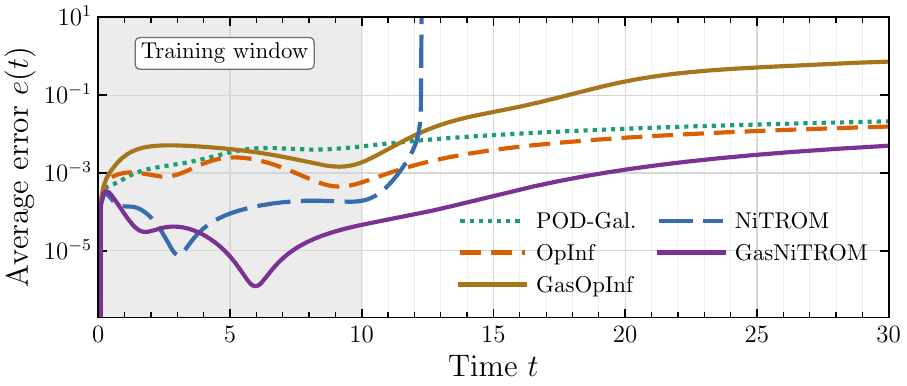}
        \caption{$t_{\text{final}}=30$}
        \label{fig:toymodel_err_30}
    \end{subfigure}
    \caption{Toy model average testing error \eqref{eq:toymodel_error} for final times of (a) 10 and (b) 30.}
    \label{fig:toymodel_error}
\end{figure}

\begin{figure}
    \centering
    \begin{subfigure}[b]{0.473\textwidth}
        \centering
        \includegraphics[width=\textwidth]{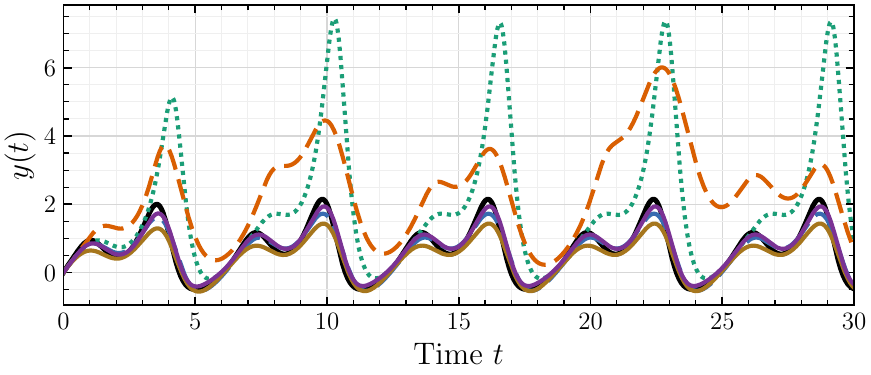}
        \caption{$u(t) = 0.45(\sin(t) + \cos(2t))$}
        \label{fig:toymodel_low_30}
    \end{subfigure}\hfill
    \begin{subfigure}[b]{0.487\textwidth}
        \centering
        \includegraphics[width=\textwidth]{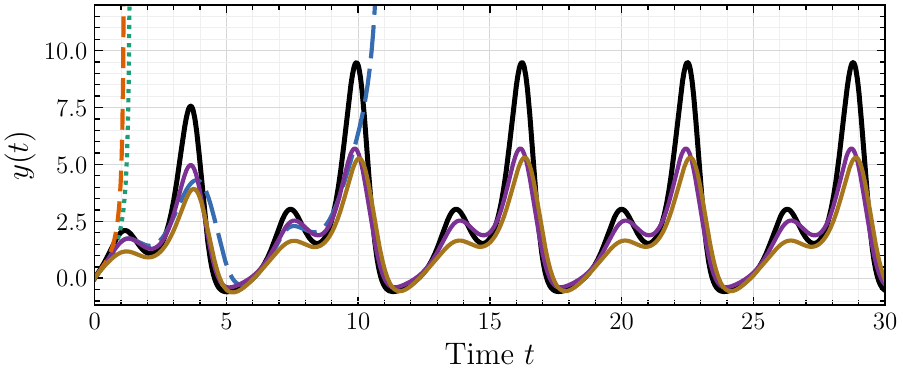}
        \caption{$u(t) = 0.65(\sin(t) + \cos(2t))$}
        \label{fig:toymodel_high_30}
    \end{subfigure}\hfill
    \caption{Time history of the toy model output $y$ in response to a sinusoidal input of amplitude (a) 0.45 and (b) 0.65. The black continuous line is the ground truth given by \eqref{eq:toymodel}. The rest of the legend is as in Figure \ref{fig:toymodel_err_30}.}
    \label{fig:toymodel_testing}
\end{figure}

\section{Application to the lid-driven cavity flow} \label{sec:cavity}
In this section, we apply our model reduction procedure to an incompressible fluid flow inside a lid-driven square cavity. The flow dynamics are governed by the incompressible Navier-Stokes equations
\begin{equation}
    \begin{aligned}
        \frac{\partial \bfv}{\partial t} + \bfv \cdot \nabla \bfv &= -\nabla p + Re^{-1} \nabla^2 \bfv, \\
        \nabla \cdot \bfv &= 0,
    \end{aligned}
\end{equation}
where $\bfv(\bfx,t) = (u(\bfx, t), v(\bfx,t))$ is the two-dimensional velocity vector, $p(\bfx,t)$ is the pressure, and $Re$ is the Reynolds number. We consider a two-dimensional spatial domain of length 1 in either direction with zero-velocity boundary conditions at all walls, except for $u=1$ at the top wall. The Reynolds number is fixed at $Re = 8300$, for which the flow admits a linearly stable steady state (shown in Figure \ref{fig:cavity_baseflow}), but exhibits large amplification of disturbances due to the non-normal nature of the underlying dynamics.
The high degree of transient growth is shown in Figure \ref{fig:cavity_pert_energy}, where we show the time history of the energy of several impulse responses. We discretize the governing equations using a second-order finite-volume scheme on a uniform fully staggered grid of $N_x \times N_y = 100 \times 100$. The temporal integration is performed using the second-order fractional step method introduced in \cite{chorin_numerical_1968}. Our solver was verified by reproducing results from \cite{ghia_high-re_1982}.

\begin{figure}
    \centering
    \begin{subfigure}[b]{0.4\textwidth}
        \centering
        \includegraphics[width=\textwidth]{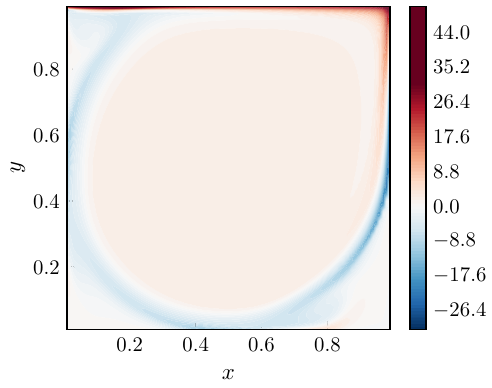}
        \caption{}
        \label{fig:cavity_baseflow}
    \end{subfigure}\hspace{1.5cm}
    \begin{subfigure}[b]{0.4\textwidth}
        \centering
        \includegraphics[width=\textwidth]{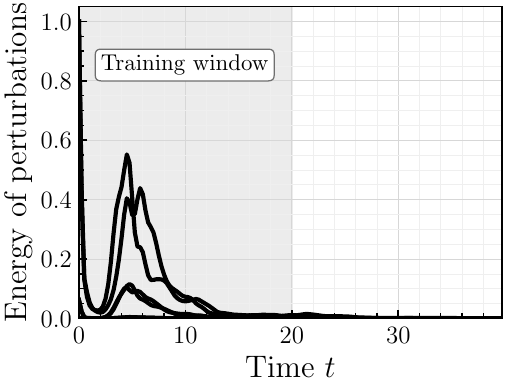}
        \caption{}
        \label{fig:cavity_pert_energy}
    \end{subfigure}
    \caption{(a) Vorticity field from the steady-state solution of the cavity flow at $Re = 8300$, and (b) the energy (i.e., squared two-norm) of the seven training trajectories.}
    \label{fig:cavity_base}
\end{figure}

For the sake of example, we are interested in computing reduced-order models capable of predicting the time-resolved response of the flow subject to spatially localized inputs that enter the $x$-momentum equation as
\begin{equation} \label{eq:cavityforcing}
    B_f(x,y)w(t) = \operatorname{exp}\left\{ -5000\left( (x-x_c)^2 + (y-y_c)^2\right) \right\} w(t),
\end{equation}
with $x_c = y_c = 0.95$. Upon spatial discretization and removal of the pressure via projection onto the space of divergence-free fields, the full-order dynamics are governed by
\begin{equation} \label{eq:cavityfom}
    \frac{\rmd \bfq}{\rmd t} = \A_f \bfq + \H_f : \bfq \bfq^\top + \B_f w,
\end{equation}
where $\bfq \in \R^N$ is the spatially discretized divergence-free velocity field (here, $N = 2 N_x N_y = \num{2e4}$), $\A_f$ governs the linear dynamics, $\H_f$ is a rank-3 tensor representation of the quadratic nonlinearity in the momentum equation, and $\B$ is the input matrix associated with \eqref{eq:cavityforcing} (we scale $\B_f$ to unit norm without loss of generality). Throughout this section, we take $\bfy = \bfq$ (i.e., we observe the whole state for training purposes).

\subsection{Training} \label{sec:cavity_training}
For training, we collect seven impulse trajectories.
These impulses are simulated by running \emph{unforced} simulations (i.e., $\bfu = 0$) initialized with initial conditions $\bfq(0) = \beta \B_f$ with
$\beta \in \{-1.0, -0.25, -0.05, 0.01, 0.05, 0.25, 1.0\}$.
The time history of the energy of the training trajectories is shown in Figure \ref{fig:cavity_pert_energy}. 
We save 160 snapshots from each trajectory at equally-spaced time intervals of $\Delta t = 0.25$, and then we perform POD. 
As in the previous section, we normalize the trajectories by their time-averaged energy. 
Using the first 50 POD modes, which contain $99.6\%$ of the variance of the full-order data, we compute a POD-Galerkin model, and we also compute Operator Inference and GasOpInf models by minimizing the objective functions \eqref{eq:opinf} and \eqref{eq:gasopinf}, respectively, penalizing the Frobenius norm of the rank-3 tensor $\H$ with the regularization parameter $\lambda = \num{3e-4}$ for OpInf and $\lambda = \num{9e-6}$ for GasOpInf.
We again remark that in the GasOpInf case, the regularization term exclusively prevents overfitting, with no additional stability-promoting benefits, since the ROM is stable by construction.
As discussed in \cite{padovan_data-driven_2024}, in order to guarantee that the optimal $\bfphi$ and $\bfpsi$ span divergence-free subspaces, we pre-project the FOM data onto the span of the first 200 POD modes, which contain $>99.99\%$ of the variance.
As an added benefit, this also slightly reduces the training cost, since it effectively reduces the size of the full-order model to $n = 200$.
In this case study, GasOpInf was initialized with the POD-Galerkin tensors projected onto the stable subspace as done previously, NiTROM is initialized with the POD-Galerkin model, and GasNiTROM with the GasOpInf model.
We then train by progressively extending the length of the time horizon.
That is, we first optimize a model over the time window $t \in [0, 1.25]$, then $t\in[0,2.5]$, all the way down to $t \in [0, 20]$. 
We explicitly do not extend the window until the final trajectory time of $t=40$ to explore the models' effectiveness in extrapolating on incomplete temporal data.
The training was conducted by using the L-BFGS algorithm as in \ref{sec:toymodel} until $t=7.5$ for both models due to the near second-order convergence towards a local minimum.
After $t=7.5$, the AdamW optimizer \cite{loshchilov_decoupled_2019}, natively available in \verb|PyTorch|, was used as the problem size became too large to query the gradient multiple times each iterative step.
A stability-promoting penalty term of the form used in Section 5.1 of \cite{padovan_data-driven_2024} was added to the NiTROM objective function after a first optimization pass due to the presence of unstable linear dynamics. The optimization was rerun on the entire $t=20$ temporal window until all eigenvalues of $\A$ were in the left-half of the complex plane.
Additionally, the training was conducted using the \textit{coordinate-descent} logic, whereby we successively fix the reduced-order tensors and optimize $\bfphi$ and $\bfpsi$, and vice versa.
It was observed that this coordinate-descent approach led to more robust training and avoided getting trapped in poor local minima.
This was done to provide more robustness to the training approach and avoid getting stuck in poor local minima. 

\subsection{Testing} \label{sec:cavity_testing}
In this section, we compare GasNiTROM against NiTROM, GasOpInf, Operator Inference, and POD-Galerkin. We test the models by generating 25 impulse responses with the impulse magnitude $\beta \in [-1,1]$ drawn uniformly at random. The training and testing errors for GasNiTROM, NiTROM, GasOpInf, Operator Inference, and the POD-Galerkin model (all with dimension $r=50$) are shown in Figures \ref{fig:cavity_error_train} and \ref{fig:cavity_error_test}, respectively. The error is defined as
\begin{equation} \label{eq:cavity_error}
    e(t) = \frac{1}{N_{\text{traj}}}\sum_{j=1}^{N_{\text{traj}}}\frac{1}{e_{j, \text{avg}}}\lVert \bfq^{(j)}(t) - \hat{\bfq}^{(j)}(t) \rVert^2,\quad e_{j,\text{avg}} = \frac{1}{N}\sum_{i=0}^{N-1}\lVert \bfq^{(j)}(t_i) \rVert^2
\end{equation}
where $\bfq$ is the ground truth and $\hat{\bfq}$ is the prediction given by the ROM. 
From the figures, up until the end of the training period of $t=20$, we see that the GasNiTROM and NiTROM models attain similar errors for all times, whereas the other models can be up to two orders of magnitude less accurate. Once the models extrapolate past the final training time, the error attained by the NiTROM model increases by about an order of magnitude, whereas the error from the GasNiTROM model remains low for all times.
Notably, unlike OpInf, POD-Galerkin, and GasOpInf, the NiTROM and GasNiTROM models do not exhibit a large error peak at time $t \approx 5$, when the transient energy of the flow peaks (see Figure \ref{fig:cavity_base}b), suggesting that our NiTROM-like architectures successfully detected the mechanisms responsible for energy growth inside the cavity.
% The most notable feature from these figures is the lack of a peak in error around $t=5$ (when the fluid exhibits the most transient growth) for NiTROM and GasNiTROM, which can be attributed to the oblique projection bases for these models.

\begin{figure}
    \centering
    \begin{subfigure}[b]{0.319\textwidth}
        \centering
        \includegraphics[width=\textwidth]{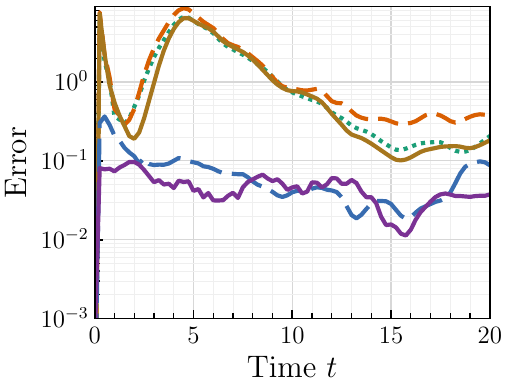}
        \caption{$t_{\text{final}}=20$}
        \label{fig:cavity_error_train_trained}
    \end{subfigure}\hfill
    \begin{subfigure}[b]{0.641\textwidth}
        \centering
        \includegraphics[width=\textwidth]{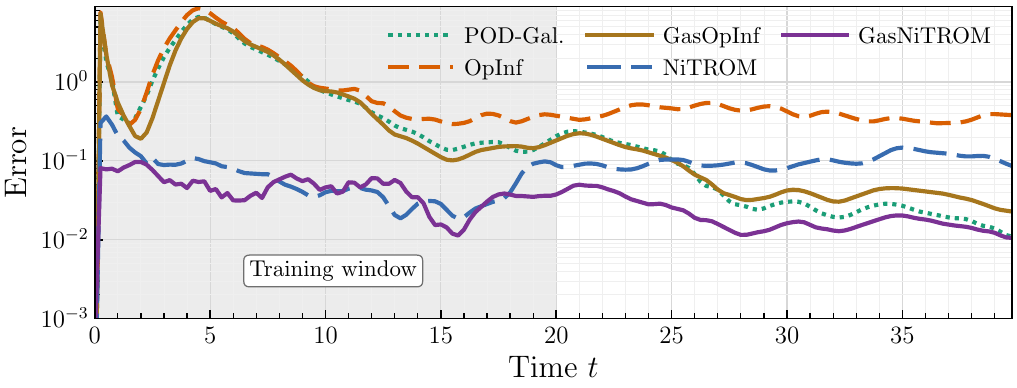}
        \caption{$t_{\text{final}}=40$}
        \label{fig:cavity_error_train_full}
    \end{subfigure}
    \caption{Cavity flow average training error \eqref{eq:cavity_error} for final times of (a) 20 and (b) 40.}
    \label{fig:cavity_error_train}
\end{figure}
\begin{figure}
    \centering
    \begin{subfigure}[b]{0.319\textwidth}
        \centering
        \includegraphics[width=\textwidth]{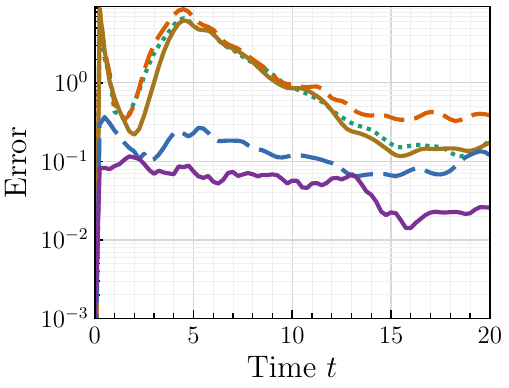}
        \caption{$t_{\text{final}}=20$}
        \label{fig:cavity_error_test_trained}
    \end{subfigure}\hfill
    \begin{subfigure}[b]{0.641\textwidth}
        \centering
        \includegraphics[width=\textwidth]{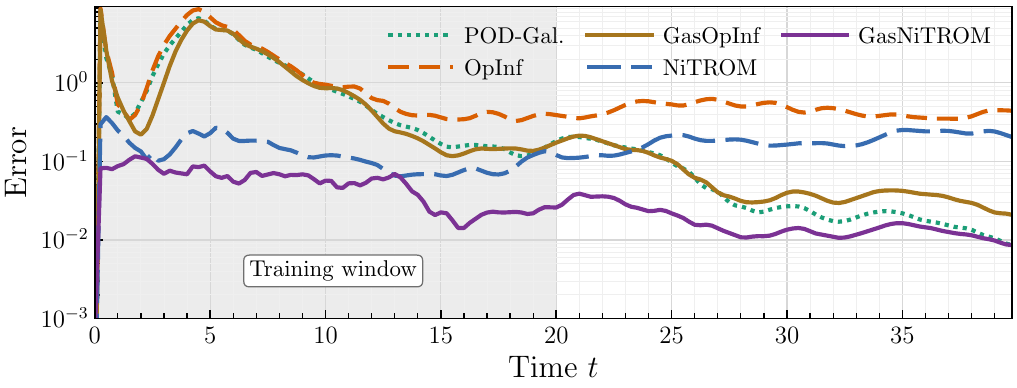}
        \caption{$t_{\text{final}}=40$}
        \label{fig:cavity_error_test_full}
    \end{subfigure}
    \caption{Cavity flow average testing error \eqref{eq:cavity_error} for final times of (a) 20 and (b) 40.}
    \label{fig:cavity_error_test}
\end{figure}

We also test the ability of our reduced-order model to predict the response of the system to sinusoidal inputs of the form $w(t)=a \sin(k \omega t)$, where $a \in \{0.1, 0.7\}$.
The results are shown in Figure \ref{fig:cavity_forcing_energy} where we see the response to harmonics of the angular frequency $\omega = 1$.
Across all cases, GasNiTROM performs as well, or better, than NiTROM. 
More specifically, for the low amplitude case of $a=0.1$ (see Figure \ref{fig:cavity_forcing_0p1_energy}), GasNiTROM correctly captures the steep energy growth at early times and saturates to a meaningful amplitude.
By contrast, the models based on orthogonal projections (OpInf, GasOpInf, and POD-Galerkin) miss the energy growth entirely, and settle to post-transient states with much lower amplitude than the ground truth (black line in the plots).
When the amplitude is increased to $a=0.7$, the flow behavior becomes more extreme and the models start operating even farther outside the training regime. 
In this regime, all models become less reliable, but on average GasNiTROM outperforms the rest, and is often capable of forecasting through the steep energy growth and onto a meaningful post-transient attractor.
We emphasize that the classical NiTROM model blows up in finite time for the $k=4$ case.
% For instance, Operator Inference attains a very large (but still stable) response due to the forcing, while the prediction from NiTROM blows-up in finite-time for $k=4$. For all three cases, GasNiTROM is generally capable of predicting the amplitude of the system response, with the prediction becoming unreliable for $k=2$ and $k=4$.
\begin{figure}
    \centering
    \begin{subfigure}[b]{0.483\textwidth}
        \centering
        \includegraphics[width=\textwidth]{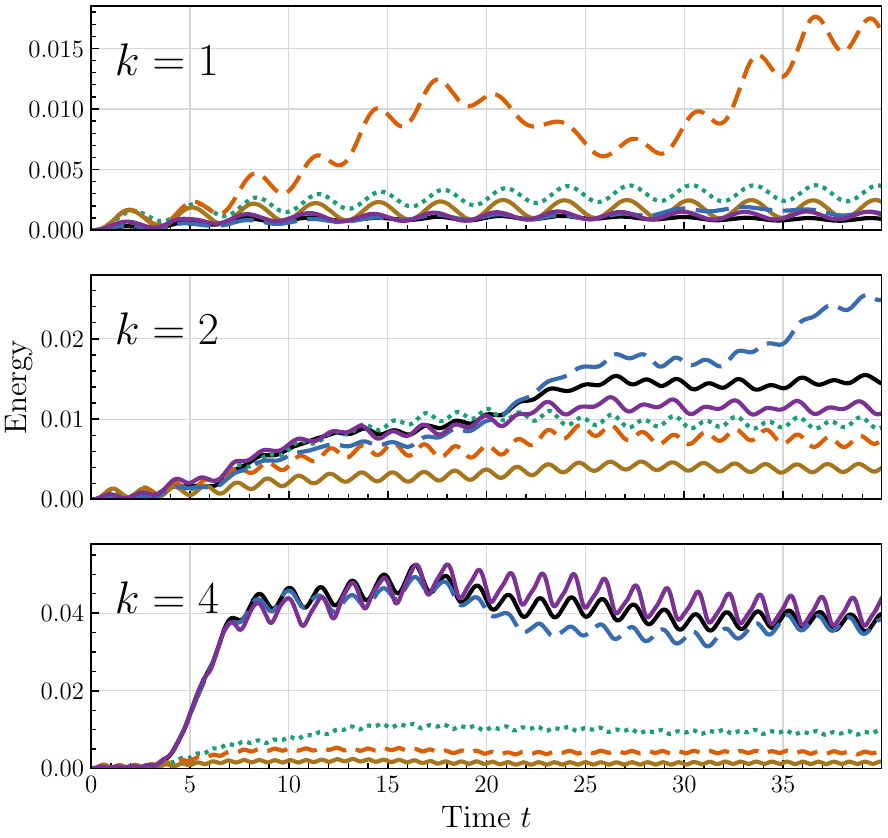}
        \caption{$a=0.1$}
        \label{fig:cavity_forcing_0p1_energy}
    \end{subfigure}\hfill
    \begin{subfigure}[b]{0.477\textwidth}
        \centering
        \includegraphics[width=\textwidth]{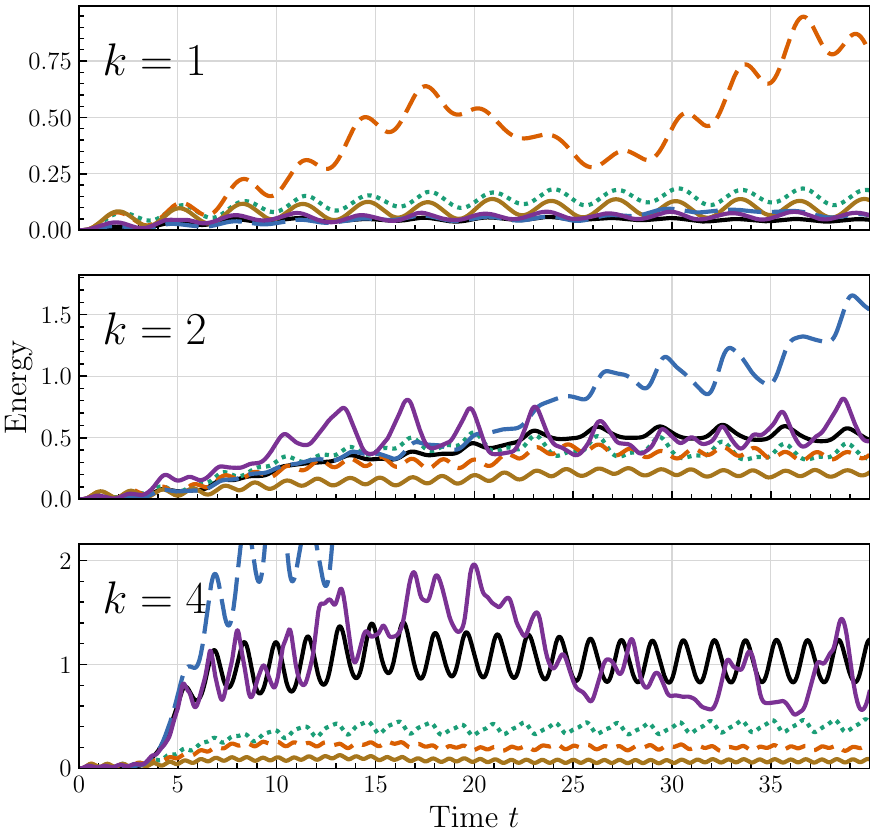}
        \caption{$a=0.7$}
        \label{fig:cavity_forcing_0p7_energy}
    \end{subfigure}
    \caption{Cavity flow time-response of the energy of the perturbations in response to sinusoidal inputs $w(t)=a\sin(kt)$ for an $a$ of (a) 0.1, (b) 0.7. The black line is the ground truth obtained by numerical simulation of the Navier-Stokes equations.}
    \label{fig:cavity_forcing_energy}
\end{figure}
Finally, we show vorticity snapshots at $t=30$ from the response of sinusoidal forcing with $k=4$ at $a=0.1$ and $a=0.7$.
The $a=0.1$ case is shown in Figure \ref{fig:cavity_forcing_snapshots_0p1}, where Operator Inference, GasOpInf, and POD-Galerkin underestimate the magnitude of the vorticity and predict the wrong phase of the vortical structures. 
Meanwhile, both NiTROM and GasNiTROM display very good agreement with the ground-truth data. 
In Figure \ref{fig:cavity_forcing_snapshots_0p7} the magnitude is increased to $a=0.7$ to test the models' effectiveness under more extreme full-order dynamics. 
For this case, NiTROM blows up, the GasOpInf, OpInf and POD-Galerkin models predict the wrong phase and amplitude of the vortical structures, while GasNiTROM predicts spurious vortical artifacts, albeit with what appears to be the correct amplitude.
For completeness, we have included a training scenario for the cavity flow where we compute optimal ROMs of dimension $r=30$ and train over the entire $t \in [0, 40]$ time window to determine the variation in model accuracy due to variation in the ROM dimension.
These results are available in Appendix \ref{ap:cavity_r30}.

\begin{figure}
    \centering
    \begin{subfigure}[b]{0.48\textwidth}
        \centering
        \includegraphics[width=\textwidth]{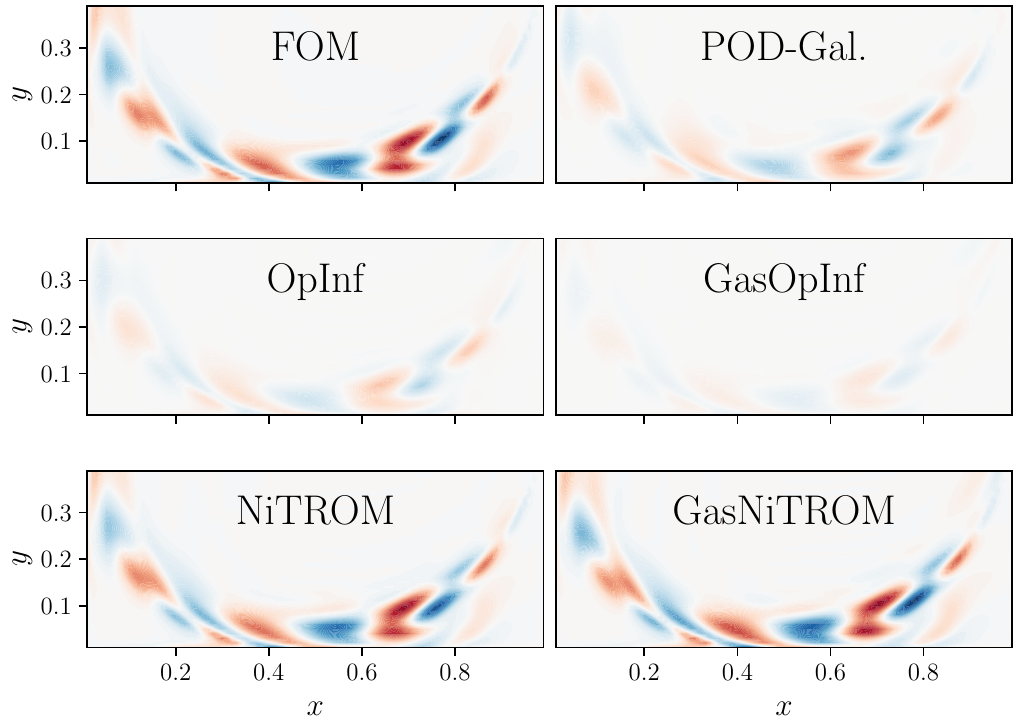}
        \caption{$a=0.1$, maximum vorticity 0.71}
        \label{fig:cavity_forcing_snapshots_0p1}
    \end{subfigure}\hfill
    \begin{subfigure}[b]{0.48\textwidth}
        \centering
        \includegraphics[width=\textwidth]{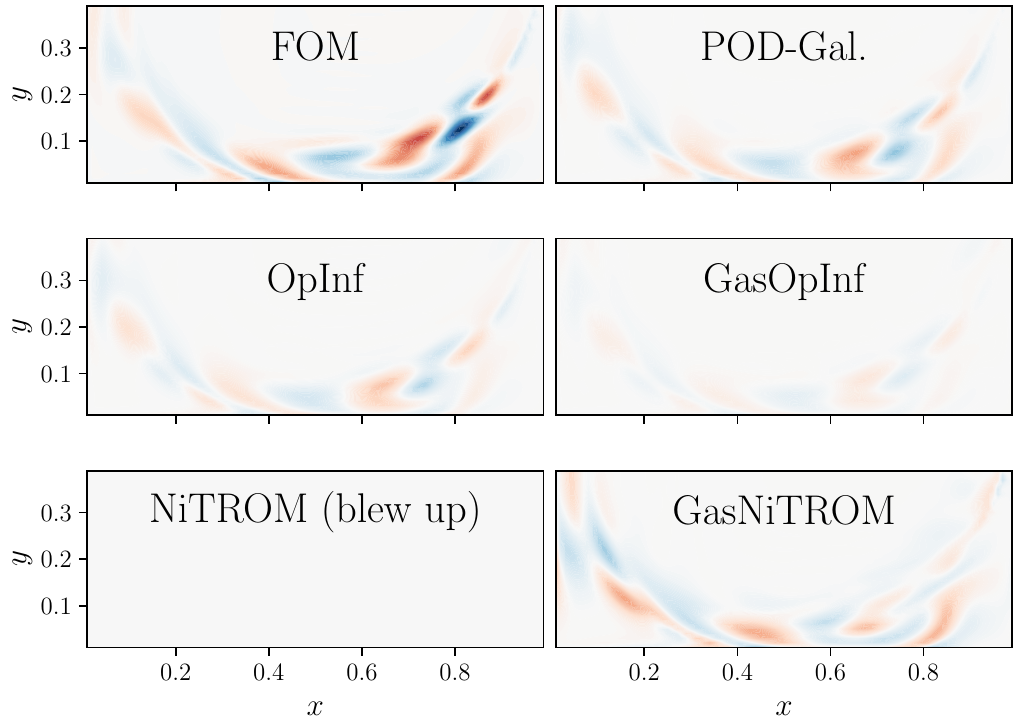}
        \caption{$a=0.7$, maximum vorticity 5.3}
        \label{fig:cavity_forcing_snapshots_0p7}
    \end{subfigure}
    \caption{Cavity flow vorticity field at time $t=30$ given sinusoidal forcing with (a) $a=0.1$, (b) $a=0.7$, $k=4$ in Figure \ref{fig:cavity_forcing_energy}. Red indicates positive vorticity with maximum value stated, blue indicates negative vorticity with minimum value of opposite sign of the maximum, and white is zero vorticity.}
    \label{fig:cavity_forcing_snapshots}
\end{figure}

% \begin{figure}
%     \centering
%     \includegraphics[width=0.7\textwidth]{figures/cavity_50_forcing_0p1_k4_snapshot_all}
%     \caption{Cavity flow vorticity field at time $t=30$ given sinusoidal forcing with $a=0.1$, $k=4$ in Figure \ref{fig:cavity_forcing_energy}. Red indicates positive vorticity with maximum value 0.71, blue indicates negative vorticity with minimum value -0.71, and white is zero vorticity.}
%     \label{fig:cavity_forcing_snapshots_0p1}
% \end{figure}
% \begin{figure}
%     \centering
%     \includegraphics[width=0.7\textwidth]{figures/cavity_50_forcing_0p7_k4_snapshot_all}
%     \caption{Cavity flow vorticity field at time $t=30$ given sinusoidal forcing with $a=0.7$, $k=4$ in Figure \ref{fig:cavity_forcing_energy}. Red indicates positive vorticity with maximum value 5.3, blue indicates negative vorticity with minimum value -5.3, and white is zero vorticity.}
%     \label{fig:cavity_forcing_snapshots_0p7}
% \end{figure}

\section{Conclusion}

In this paper, we have introduced a non-intrusive framework to learn stable and accurate reduced-order models of nonlinear high-dimensional systems exhibiting strong transient behavior.
% Examples of such systems include high--shear flows such as mixing layers, jets, and boundary layers, 
Several state-of-the-art non-intrusive model reduction methods fail to accurately capture the underlying mechanisms responsible for the presence of strong transients and often produce unstable models.
Recent developments have solved the issue of stability by enforcing known structure or symmetries in the model reduction architecture, or by parameterizing the latent-space dynamics so as to guarantee global asymptotic stability in the sense of Lyapunov.
Nonetheless, all these approaches achieve dimensionality reduction via orthogonal projections that often truncate the low-energy, dynamically-critical states that are responsible for the aforementioned high-energy transients observed in the full-order model.
In order to address this issue, we introduce a non-intrusive framework designed to simultaneously identify globally-asymptotically-stable latent-space dynamics and oblique projection operators capable of capturing the sensitivity mechanisms that drive the dynamics of the original system.
In particular, given training trajectories and a Lyapunov-based parameterization of quadratic polynomial reduced-order dynamics, we fit a model by optimizing over the product manifold of a Grassmann manifold, Stiefel manifold, and as many linear spaces as necessary to parameterize the dynamics under the stability constraint.
The resulting reduced-order model is guaranteed to be globally asymptotically stable (in a Lyapunov sense), and is also capable of forecasting through large-amplitude transients.
This framework is termed GasNiTROM -- ``Globally-asymptotically-stable Non-intrusive Trajectory-based optimization of Reduced-Order Models'' -- and it is demonstrated on two examples: a simple three-dimensional toy model system of ordinary differential equations and the two-dimensional incompressible lid-driven cavity flow at Reynolds number $Re=8300$.
In both cases, GasNiTROM outperforms state-of-the-art non-intrusive model reduction techniques (some of which blow up in finite time), and exhibits an improvement in forecasting accuracy of up to two orders of magnitude.
% Currently, GasNiTROM is limited to latent-space dynamics of quadratic polynomial form but, in the future, it would be interesting to explore the possibility of extending it to a general form of the dynamics, particularly those with non-polynomial terms.

\section*{Acknowledgments}
The authors wish to thank the  Texas Advanced Computing Center (TACC) and Nvidia teams at the 2025 TACC Open Hackathon for their assistance in the optimization of the NiTROM code base on GPUs.
This work was supported by the Center for Hypersonics and Entry Systems Studies in the Grainger College of Engineering at the University of Illinois Urbana-Champaign and by the National Science Foundation under grant 2139536, issued to the University of Illinois at Urbana-Champaign by the Texas Advanced Computing Center (TACC) under subaward UTAUS-SUB00000545 with Dr. Daniel Stanzione as the PI.
A. P. gratefully acknowledges startup support from the Newark School of Engineering at New Jersey Institute of Technology.

\bibliography{references}
\clearpage

\appendix
\section{Proof of Proposition \ref{prop:gradients}}
\label{ap:proof}
The authors of \cite{padovan_data-driven_2024} provide a detailed derivation of the gradients of the projection matrices $\bfphi$ and $\opsi$ and of the tensors $\A$, $\H$, and $\B$ in section 2.3, using calculus of variations. For the parametrized tensors, the proof relies further on calculus of variations. Recall that the tensor $\A$ is parametrized as in \eqref{eq:linstable}. Taking a small variation in $\A$, $\delta \A$, results in
\begin{equation}
    \delta \A = (\delta \widetilde{\K})\widetilde{\Q} - (\delta \widetilde{\MR})\widetilde{\Q} + (\widetilde{\K} - \widetilde{\MR})\delta \widetilde{\Q}.
\end{equation}
Denoting $G_\alpha = \nabla_\alpha \overline{L}$ for any $\alpha \in \{\A, \H, \K, \MR, \Q, \S, \B\}$, we take the inner product of the above with $G_{\A}$, as we know that the variation in $\overline{L}$, $\delta \overline{L}$ for a given $\delta \A$ is given by
\begin{equation}
    \delta \overline{L} = \inp*{G_{\A}}{\delta \A},
\end{equation}
which allows us to determine the gradients of parametrized tensors given the gradient of $\A$ at a point. In doing so, we arrive at
\begin{equation}
    \inp*{G_{\A}}{\delta \A} = \inp*{G_{\A}}{(\delta \widetilde{\K})\widetilde{\Q}} - \inp*{G_{\A}}{(\delta \widetilde{\MR})\widetilde{\Q}} + \inp*{G_{\A}}{(\widetilde{\K} - \widetilde{\MR})\delta \widetilde{\Q}}.
\end{equation}
Focusing on the first term on the right-hand side, we use the definition of the matrix inner product in $\R^n$ to write
\begin{equation}
    \inp*{G_{\A}}{(\delta \widetilde{\K}) \widetilde{\Q}} = \tr(G_{\A}^\top (\delta \widetilde{\K}) \widetilde{\Q}) = \tr(\widetilde{\Q} G_{\A}^\top \delta \widetilde{\K}) = \inp*{G_{\A} \widetilde{\Q}^\top}{\delta \widetilde{\K}}.
\end{equation}
Thus giving the gradient of $\overline{L}$ with respect to $\widetilde{\K}$ as
\begin{equation}
    G_{\widetilde{\K}} = G_{\A} \widetilde{\Q}^\top
\end{equation}
Similar methods can be used to obtain the gradients with respect to $\widetilde{\MR}$ and $\widetilde{\Q}$, respectively, as
\begin{equation}
    G_{\widetilde{\MR}} = -G_{\A} \widetilde{\Q}^\top, \quad G_{\widetilde{\Q}}^{(\A)} = (\widetilde{\K} - \widetilde{\MR})^\top G_{\A},
\end{equation}
where the additional notation in the gradient of $\overline{L}$ with respect to $\widetilde{\Q}$ is due to the presence of $\widetilde{\Q}$ in the parameterizations of both $\A$ and $\H$. The gradients corresponding to the parametrization of $\H$ can be derived by considering each frontal slice of $\H$ as
\begin{equation}
    (\H)_k = (\widetilde{\S})_k \widetilde{\Q}.
\end{equation}
Taking a small variation in $(\H)_k$, we can similarly arrive at gradients of $\overline{L}$ with respect to $(\widetilde{\S})_k$ and $\widetilde{\Q}$ as
\begin{equation}
    G_{(\widetilde{\S})_k} = G_{(\H)_k} \widetilde{\Q}^\top, \quad G_{\widetilde{\Q}}^{(\H)} = \sum_{k=1}^r (\widetilde{\S})_k^\top G_{(\H)_k},
\end{equation}
which then leads to the full gradient of $\overline{L}$ with $\widetilde{\Q}$,
\begin{equation}
    G_{\widetilde{\Q}} = (\widetilde{\K} - \widetilde{\MR})^\top G_{\A} + \sum_{k=1}^r (\widetilde{\S})_k^\top G_{(\H)_k},
\end{equation}
and \eqref{eq:gradQtilde} can be recovered by substituting the parameterizations of $\widetilde{\K}$ and $\widetilde{\MR}$ given by \eqref{eq:matrix_param}. The above expressions provide gradients of $\overline{L}$ with respect to the constrained tensors. To compute gradients with respect to the unconstrained parameters of \eqref{eq:optproblem} outlined in equations \eqref{eq:matrix_param} and \eqref{eq:pd_param}. For example, taking a small variation in $\widetilde{\K}$, the inner product with $G_{\widetilde{\K}}$ results in
\begin{align*}
    \inp*{G_{\widetilde{\K}}}{\delta \widetilde{\K}} &= \inp*{G_{\widetilde{\K}}}{\delta \K} - \inp*{G_{\widetilde{\K}}}{\delta \K^\top} \\
    &= \inp*{G_{\widetilde{\K}} - G_{\widetilde{\K}}^\top}{\delta \K},
\end{align*}
giving the gradient of $\overline{L}$ with respect to $\K$ as $G_{\widetilde{\K}} - G_{\widetilde{\K}}^\top$. After some substitution, \eqref{eq:gradJ} can be trivially recovered. The other gradients can be obtained similarly, and the proof is concluded.
\clearpage

\section{Derivation of Gradients for GasOpInf}
\label{ap:opinf_grads}
To begin, we rewrite \eqref{eq:opinf} by defining $\Z \in \R^{r \times S}$ where the columns of $\Z$ are given by all $S$ snapshots of $\bfz$, and $\U \in \R^{m \times S}$ where the columns of $\U$ are given by all $S$ snapshots of $\bfu$. This allows us to remove the summation over snapshots and write the objective function as
\begin{equation}
    J = \left \lVert \dot{\Z} - \A\Z - \H : \Z \Z^\top - \B \U \right \rVert^2_F.
\end{equation}
Defining the residual $\E = \dot{\Z} - \A \Z - \H:\Z\Z^\top - \B \U$, the objective function $J$ can be rewritten as the Euclidean tensor inner product
\begin{equation}
    J = \tr(\E^\top \E) = \inp*{\E}{\E}.
\end{equation}
By taking a small variation in $\A$, $\delta \A$, and holding everything else fixed, the variation in $\E$ is written as
\begin{equation}
    \delta \E = -(\delta \A) \Z,
\end{equation}
and the variation in $J$ for any $\delta \E$ is given as
\begin{equation}
    \delta J = \tr(\delta (\E^\top \E)) = \tr((\delta \E)^\top \E + \E^\top (\delta \E)) = \tr((\delta \E)^\top \E) + \tr(\E^\top (\delta \E))
\end{equation}
Using the properties of the trace operator, we rearrange the above expression to arrive at $\delta J = 2 \tr(\E^\top (\delta \E)) = 2 \inp*{\E}{\delta \E}$. Substituting $\delta \E = -(\delta \A) \Z$,
\begin{equation}
    \delta J = -2 \inp*{\E}{-(\delta \A) \Z} = -2 \tr(\E^\top (\delta \A)\Z) = -2 \tr(\Z \E^\top (\delta \A)) = \inp*{-2 \E \Z^\top}{\delta \A}
\end{equation}
Therefore, the gradient of the objective function with respect to $\A$, denoted here as $G_{\A}$, is given by
\begin{equation}
    G_{\A} = -2 \E \Z^\top.
\end{equation}
The gradient of $J$ with respect to the control $\B$ can be computed in a similar manner, resulting in
\begin{equation}
    G_{\B} = -2 \E \U^\top.
\end{equation}
To compute the gradient of $J$ with respect to the tensor $\H$, we look at the $j$-th column of the residual, $\E_j$, given by
\begin{equation}
    \E_j = \dot{\Z}_j - \A \Z_j - \H : \Z_j \Z_j^\top - \B \U_j,
\end{equation}
where the $i$-th component of the contraction $\H : \Z_j \Z_j^\top$ is given by
\begin{equation}
    (\H : \Z_j \Z_j^\top)_i = \sum_{p=1}^r \sum_{q=1}^r \H_{ipq} \Z_{pj} \Z_{qj}.
\end{equation}
The $i$-th component of the variation in $\E_j$ due to a variation in $\H$ is
\begin{equation}
    (\delta \E_j)_i = \sum_{p,q} (\delta \H)_{ipq} \Z_{pj} \Z_{qj}.
\end{equation}
Now that we have $(\delta \E_j)_i$, we can compute the variation in $J$ over all $j$ as
\begin{equation}
    \delta J = 2 \sum_{j=1}^S \E_j^\top \delta \E_j = -2 \sum_j \sum_{i,p,q} \E_{ij} (\delta \H)_{ipq} \Z_{pj} \Z_{qj}.
\end{equation}
Rearranging the right-hand side of the above expression, the components of the gradient of $J$ with respect to $\H$ can be isolated as
\begin{equation}
    (G_{\H})_{ipq} = -2 \sum_{j=1}^S \E_{ij} \Z_{pj} \Z_{qj},
\end{equation}
thus giving the gradients of the GasOpInf objective function with respect to the overall operators $\A$, $\H$, and $\B$. For the parametrized tensors, the same approach outlined in Appendix \ref{ap:proof} can be applied, as it was derived for gradients of a generalized objective function with respect to $\A$ and $\H$.
\clearpage

\section{Cavity flow results for $r=30$} \label{ap:cavity_r30}
Training was performed analogously to Section \ref{sec:cavity_training} with the same training dataset, but here considering the first 30 POD modes for the projection bases of POD-Galerkin, Operator Inference, and GasOpInf models, and as the initial condition for NiTROM and GasNiTROM. The Operator Inference and GasOpInf models were computed with a regularization parameter of $\lambda = \num{1e7}$ and $\lambda = \num{2e-2}$, respectively. Here, we optimize all models over the entire time window to show the accuracy of the model subject to a lower reduced-order dimension.

We compare GasNiTROM against all other models in the same fashion as Section \ref{sec:cavity_testing}. The training and testing errors for GasNiTROM, NiTROM, GasOpInf, Operator Inference, and the POD-Galerkin model (all with dimension $r=30$) are shown in Figures \ref{fig:ap_cavity_training_error} and \ref{fig:ap_cavity_testing_error}, respectively.

\begin{figure}
    \centering
    \begin{subfigure}[b]{0.4\textwidth}
        \centering
        \includegraphics[width=\textwidth]{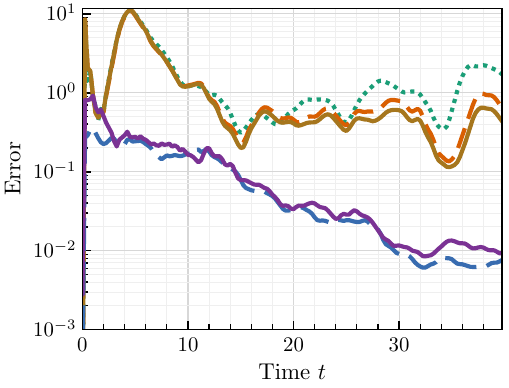}
        \caption{Training error}
        \label{fig:ap_cavity_training_error}
    \end{subfigure}\hspace{1.5cm}
    \begin{subfigure}[b]{0.4\textwidth}
        \centering
        \includegraphics[width=\textwidth]{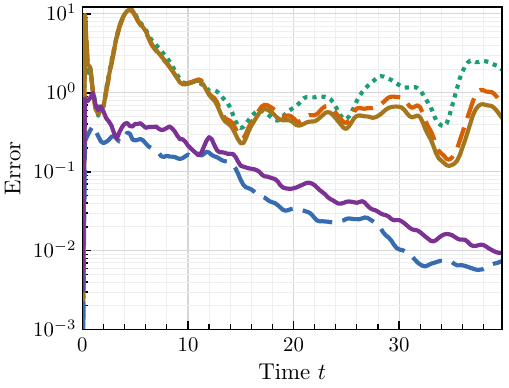}
        \caption{Testing error}
        \label{fig:ap_cavity_testing_error}
    \end{subfigure}
    \caption{Cavity flow ($r=30$) (a) training error from the 7 training impulse responses, and (b) testing error for the 25 unseen impulse responses. The error is defined in \eqref{eq:cavity_error}.}
    \label{fig:ap_cavity_error}
\end{figure}

As done previously, we use sinusoidal inputs of the form $w(t)=a \sin(kt)$, where $a \in \{0.1, 0.9\}$, initialized from the stable steady state. The results are shown in Figure \ref{fig:ap_cavity_forcing_energy}. In all cases, GasNiTROM performs equally as well or better than NiTROM. For $a=0.9$ and $k\in\{2,4\}$, the GasNiTROM prediction is not as reliable as lower forcing amplitudes, although it is still able to track the full-order model through the large initial transient and settle into a nearly correct steady-state. The other models are not able to do so, demonstrated by the trajectory generated from the NiTROM model blowing up for $k=4$.

\begin{figure}
    \centering
    \begin{subfigure}[b]{0.483\textwidth}
        \centering
        \includegraphics[width=\textwidth]{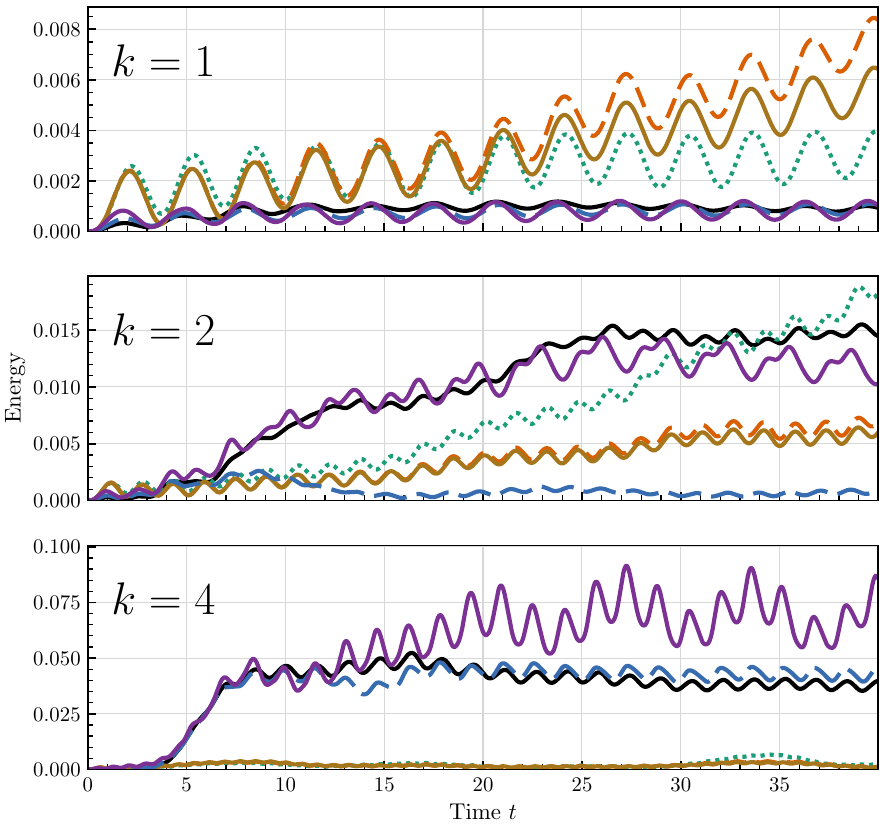}
        \caption{$a=0.1$}
        \label{fig:ap_cavity_forcing_0p1_energy}
    \end{subfigure}\hfill
    \begin{subfigure}[b]{0.477\textwidth}
        \centering
        \includegraphics[width=\textwidth]{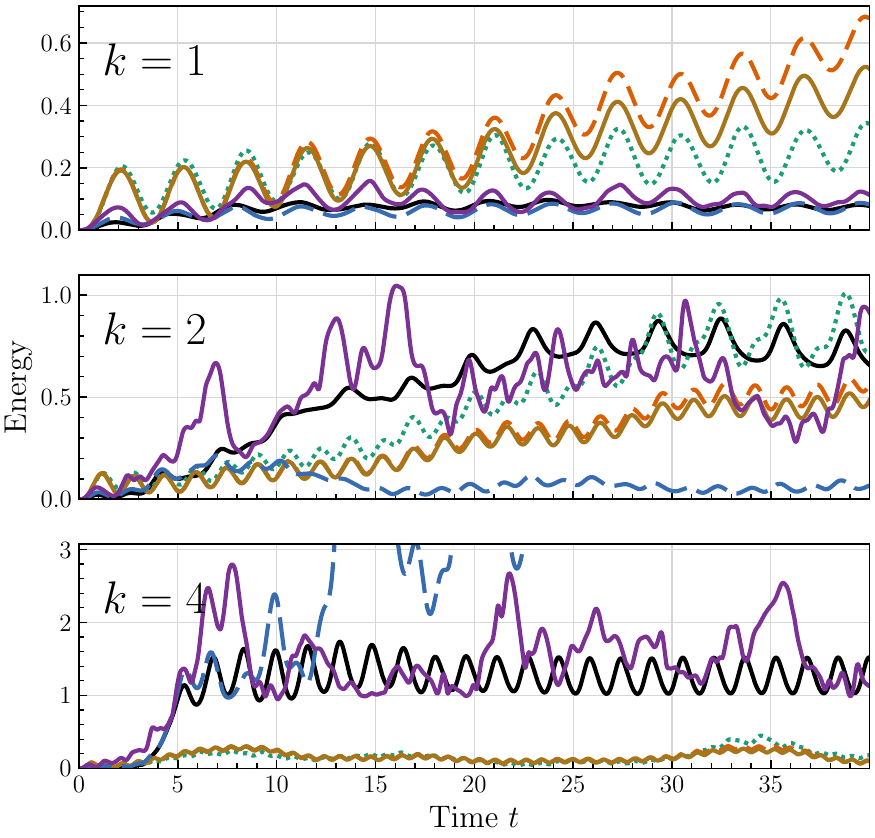}
        \caption{$a=0.9$}
        \label{fig:ap_cavity_forcing_097_energy}
    \end{subfigure}
    \caption{Cavity flow ($r=30$) time-response of the energy of the perturbations in response to sinusoidal inputs $w(t)=a\sin(kt)$ for an $a$ of (a) 0.1, (b) 0.9}
    \label{fig:ap_cavity_forcing_energy}
\end{figure}

We again show vorticity snapshots at $t=30$ from the response of sinusoidal forcing with $k=4$ and varying $a$ from $a=0.1$ to $a=0.9$. The $a=0.1$ case is shown in Figure \ref{fig:ap_cavity_forcing_snapshots_0p1}, and shows how Operator Inference, GasOpInf, and POD-Galerkin underestimate the magnitude of the vorticity and predict out of phase structures compared to the FOM. In Figure \ref{fig:ap_cavity_forcing_snapshots_0p9} the magnitude is increased to $a=0.9$, and the trajectory produced from NiTROM experiences blow-up, while all other models remain stable but uncorrelated with the full-order model.

\begin{figure}
    \centering
    \begin{subfigure}[b]{0.48\textwidth}
        \centering
        \includegraphics[width=\textwidth]{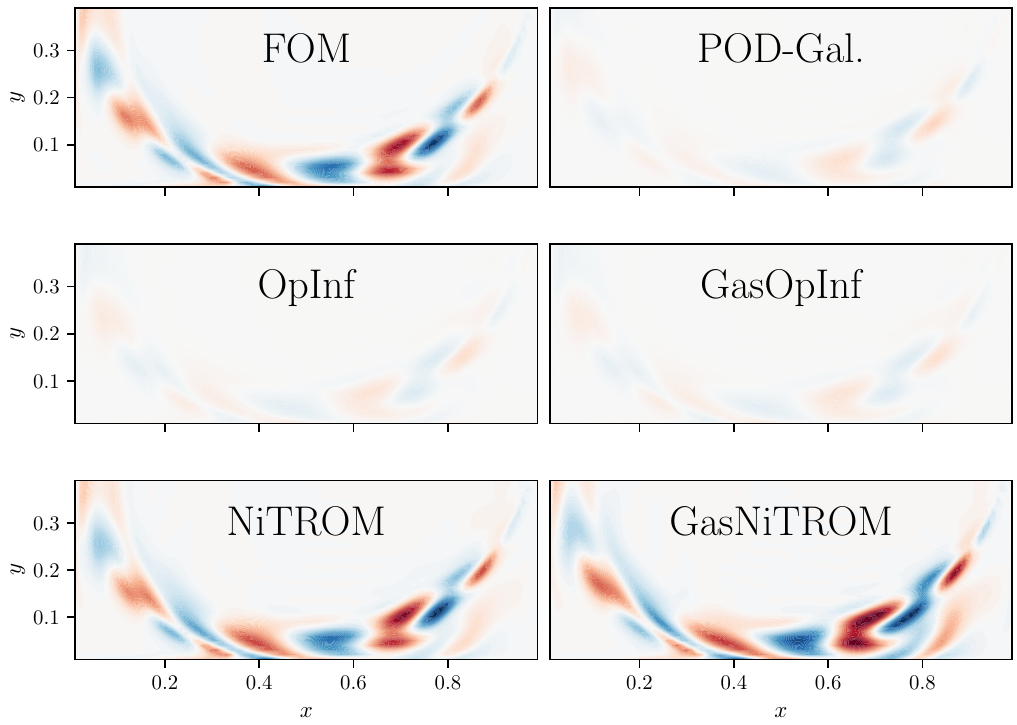}
        \caption{$a=0.1$, maximum vorticity 0.71}
        \label{fig:ap_cavity_forcing_snapshots_0p1}
    \end{subfigure}\hfill
    \begin{subfigure}[b]{0.48\textwidth}
        \centering
        \includegraphics[width=\textwidth]{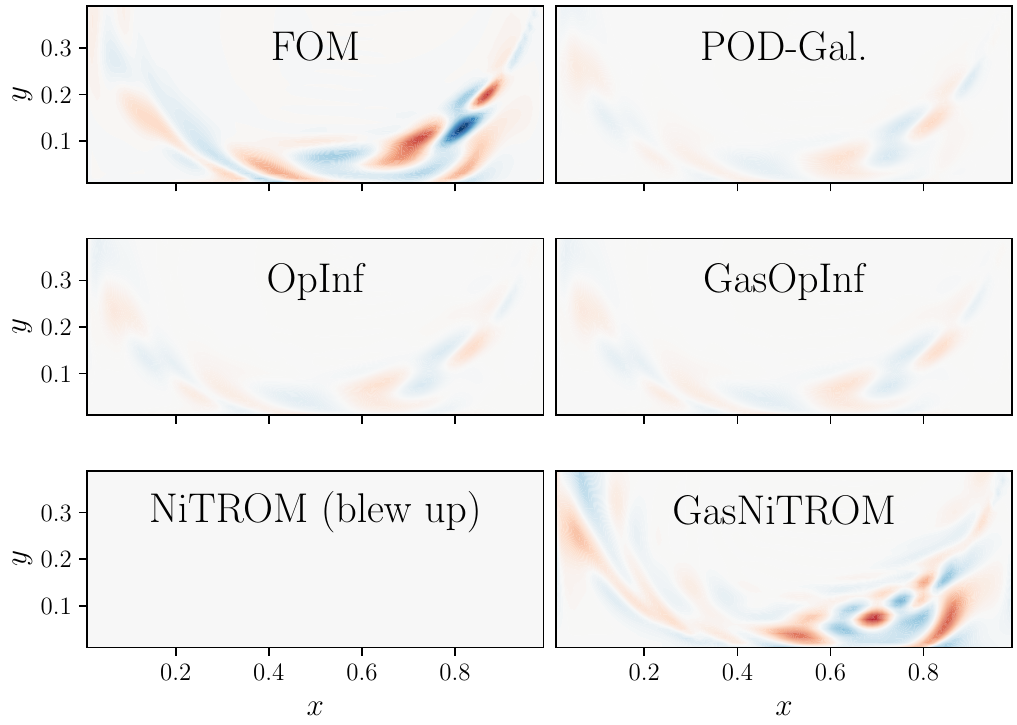}
        \caption{$a=0.9$, maximum vorticity 6.1}
        \label{fig:ap_cavity_forcing_snapshots_0p9}
    \end{subfigure}
    \caption{Cavity flow vorticity field at time $t=30$ given sinusoidal forcing with (a) $a=0.1$, (b) $a=0.9$, $k=4$ in Figure \ref{fig:ap_cavity_forcing_energy}. Red indicates positive vorticity with maximum value stated, blue indicates negative vorticity with minimum value of opposite sign of the maximum, and white is zero vorticity.}
    \label{fig:ap_cavity_forcing_snapshots}
\end{figure}

% \begin{figure}
%     \centering
%     \includegraphics[width=0.7\textwidth]{figures/cavity_30_forcing_0p1_k4_snapshot_all}
%     \caption{Cavity flow ($r=30$) vorticity field at time $t=30$ given sinusoidal forcing with $a=0.1$, $k=4$ in Figure \ref{fig:cavity_forcing_energy}. Red indicates positive vorticity with maximum value 0.71, blue indicates negative vorticity with minimum value -0.71, and white is zero vorticity.}
%     \label{fig:ap_cavity_forcing_snapshots_0p1}
% \end{figure}
% \begin{figure}
%     \centering
%     \includegraphics[width=0.7\textwidth]{figures/cavity_30_forcing_0p9_k4_snapshot_all}
%     \caption{Cavity flow ($r=30$) vorticity field at time $t=30$ given sinusoidal forcing with $a=0.7$, $k=4$ in Figure \ref{fig:cavity_forcing_energy}. Red indicates positive vorticity with maximum value 6.1, blue indicates negative vorticity with minimum value -6.1, and white is zero vorticity.}
%     \label{fig:ap_cavity_forcing_snapshots_0p9}
% \end{figure}

\end{document}